\theoremstyle{plain}\newtheorem{definition}{Definition}[section]
\theoremstyle{definition}\newtheorem{theorem}{Theorem}[section]
\theoremstyle{plain}\newtheorem{lemma}[theorem]{Lemma}
\theoremstyle{plain}
\theoremstyle{plain}\newtheorem{prop}[theorem]{Proposition}
\theoremstyle{remark}\newtheorem{remark}{Remark}[section]
\newcommand{\wgr}[1]{\textcolor{black}{#1}}
\newcommand{\Div}{\mathrm{div}\,}
\newcommand{\B}{\Big}
\newcommand{\R}{\mathbb{R}}
\newcommand{\be}{\begin{equation}}
\newcommand{\ee}{\end{equation}}
 \newcommand{\ba}{\begin{aligned}}
 \newcommand{\ea}{\end{aligned}}
\newcommand{\fbxo}{\int_{\tilde{B}_{k}}\!\!\!\!\!\!\!\!\!\! -~\,}
\newcommand{\fbxozero}{\int_{\tilde{B}_{k_{0}}}\!\!\!\!\!\!\!\!\!\!\!\! -~\,}
\newcommand{\fbxozeroo}{\int_{B_{k_{0}}}\!\!\!\!\!\!\!\!\!\!\!\! -~\,}
\newcommand{\fbxoi}{\int_{\tilde{B}_{i}} \!\!\!\!\!\!\!\!\! -~\,}
\newcommand{\fbx}{\int_{B_{k}}\!\!\!\!\!\!\!\!\!\! -~}
\newcommand{\fqxoo}{\iint_{\tilde{Q}_{k}} \!\!\!\!\!\!\!\!\!\!\!\!\!\!-\hspace{-0.2cm}-\hspace{-0.3cm}-\,~}
\newcommand{\fqxoth}{\iint_{\tilde{Q}_{3}} \!\!\!\!\!\!\!\!\!\!\!\!\!\!-\hspace{-0.2cm}-\hspace{-0.3cm}-\,~}
\newcommand{\fqxol}{\iint_{\tilde{Q}_{l}} \!\!\!\!\!\!\!\!\!\!\!\!\!-\hspace{-0.2cm}-\hspace{-0.3cm}-\,~}
\newcommand{\fqxolo}{\iint_{Q_{l}} \!\!\!\!\!\!\!\!\!\!\!\!\!-\hspace{-0.2cm}-\hspace{-0.3cm}-\,~}
\newcommand{\fqxolor}{\iint_{Q(r)} \!\!\!\!\!\!\!\!\!\!\!\!\!\!\!\!\!-\hspace{-0.24cm}-\hspace{-0.3cm}-\,~\,\,\,\,}
\newcommand{\fqxolorr}{\iint_{\tilde{Q}(r)} \!\!\!\!\!\!\!\!\!\!\!\!\!\!\!\!\!-\hspace{-0.24cm}-\hspace{-0.3cm}-\,~\,\,\,\,}
\providecommand{\bysame}{\leavevmode\hbox to3em{\hrulefill}\thinspace}
  \newcommand{\f}{\frac}
  \newcommand{\ben}{\begin{enumerate}}
   \newcommand{\een}{\end{enumerate}}
\newcommand{\ti}{\nabla}
\newcommand{\Rmnum}[1]{\expandafter\@slowromancap\romannumeral #1@}
\numberwithin{equation}{section}
\begin{document}
%%%%%%%%%%%%%%%%%%%%%%%%%%%%%%%%%%%%%%%%%%%%%%%%%%%%%%%%%%%%%%%%%%%%%%%%%%%%%%%%%%%%%%%%%%%%%%%%%%%%
\title{\wgr{A regularity criterion at one scale without pressure for suitable weak solutions to the Navier-Stokes equations}}
\author{Yanqing Wang\footnote{ Department of Mathematics and Information Science, Zhengzhou University of Light Industry, Zhengzhou, Henan  450002,  P. R. China Email: wangyanqing20056@gmail.com},\;~Gang Wu\footnote{School of Mathematical Sciences,  University of Chinese Academy of Sciences, Beijing 100049, P. R. China Email: \wgr{wugang2011@ucas.ac.cn}} ~~and ~\,
	Daoguo Zhou\footnote{
	College of Mathematics and Informatics, Henan Polytechnic University, Jiaozuo, Henan 454000, P. R. China Email:
	zhoudaoguo@gmail.com }}
\date{}
\maketitle
\begin{abstract}
In this paper, we continue our work in \cite{[JWZ]} to derive $\varepsilon$-regularity criteria \wgr{at one scale without pressure for suitable weak solutions to the Navier-Stokes equations}.
 We \wgr{establish an}
		$\varepsilon$-regularity criterion below of suitable weak solutions, for any  $\delta>0$,
$$
\iint_{Q(1)}|u|^{\f{5}{2}+\delta}dxdt\leq \varepsilon.
$$
As an application, we extend the previous corresponding results concerning the improvement of the classical Caffarelli--Kohn--Nirenberg theorem by a logarithmic factor.
 \end{abstract}
\noindent {\bf MSC(2000):}\quad 76D03, 76D05, 35B33, 35Q35 \\\noindent
{\bf Keywords:} Navier-Stokes equations; suitable weak solutions;   regularity \\
%%%%%%%%%%
\section{Introduction}
\label{intro}
\setcounter{section}{1}\setcounter{equation}{0}

We focus on   the following   incompressible Navier-Stokes equations in   three-dimensional space
	\be\left\{\ba\label{NS}
	&u_{t} -\Delta  u+ u\cdot\ti
	u+\nabla \Pi=0, ~~\Div u=0,\\
	&u|_{t=0}=u_0,
	\ea\right.\ee
	where $u $ stands for the flow  velocity field, the scalar function $\Pi$ represents the   pressure.   The
	initial  velocity $u_0$ is divergence free.

One    important regularity criterion  of suitable weak solutions of \eqref{NS} is the following one due to \cite{[Lin],[LS]}: there exists an positive universal constant  $\varepsilon$ such that  	$u\in L^{\infty}(Q(1/2))$ provided   the following conditions is satisfied
\be\label{LLS}
\iint_{Q(1)}|u|^{3}+|\Pi|^{\f32}dxdt<\varepsilon.
\ee
This  $\varepsilon$-regularity criterion plays an important role in the study of the Navier-Stokes equations (see,
e.g., \cite{[ESS],[JS],[NRS],[WW2],[KY16],[RS2]} and the
references \wgr{therein)}.
\wgr{In \cite[page 8]{[SS18]}, the authors gave a comment on regular criterion \eqref{LLS}}:`` the bootstrapping enables to lower the exponent in the smallness condition from $3$ to $\f52+\delta$\wgr{(at the} cost of having to use smallness at all scales)."
Indeed, Gustafson,   Kang and   Tsai \wgr{in \cite{[GKT]} established the} following
$\varepsilon$-regularity criterion at all scales
\begin{align}
			\sup_{0<r \leq 1 }\,\,
			\iint_{Q(r)}|u|^{\f{5}{2}} dxdt \leq \varepsilon.
			\label{tsai1}\end{align}
We refer the reader to \cite{[GKT],[TX]} for \wgr{other kind of} $\varepsilon$-regularity criteria  in terms of the velocity, the vorticity, the
gradient of the vorticity  at all scale.
  Kukavica \cite{[Kukavica]} proposed three questions regarding regularity criterion \eqref{LLS}. In particular, the second issue
is that \wgr{whether  regularity criterion \eqref{LLS} holds for the exponent
less} than $3$. Recently, Guevara   and  Phuc \cite{[GP]} first answered this question  via establishing the regularity criteria below
%Very recently, a breakthrough of \eqref{LLS} is made by  Guevara   and  Phuc in \cite{[GP]}.
%Their reads
		\be\label{GP}
		\|u\|_{L^{2p, 2q} (Q(1))}+\|\Pi\|_{L^{p, q}(Q(1))}< \varepsilon,
		~~~{3}/{ q}+{2}/{p}=
		7/2~~~\text{with}~1\leq q\leq2.\ee
Subsequently, in the spirit of \cite{[GP]}, the authors in      \cite{[HWZ]} further
generalized		Guevara and Phuc's results by proving \be\label{optical}
\|u\|_{L^{p,q}(Q(1))}+\|\Pi\|_{L^{1}(Q(1))}<\varepsilon,~~1\leq 2/p+3/q <2, 1\leq p,\,q\leq\infty.
\ee
 The third question posed by Kukavica  is that the pressure can be removed in \eqref{LLS}. In this direction,
  Wolf \cite{[Wolf10]} successfully   proved the following regularity criterion via introducing local pressure projection
$$
\iint_{Q(1)}|u|^{3}dxdt< \varepsilon.
$$

Furthermore, in \cite{[CW]}, \wgr{Chae and Wolf} studied Liouville type theorems for self-similar solutions to the Navier-Stokes
equations  by proving $\varepsilon$-regularity criteria
\be \label{wolfchae}
\sup_{-1\leq t\leq0}\int_{B(1)}|u|^{q}dx < \varepsilon,~~\f32<q\leq3.
\ee
Developing   the  technique as in  \cite{[Wolf10],[CW]}, the authors  in \cite{[JWZ]} obtained $\varepsilon$-regularity condition
\be\label{JWZ}
\iint_{Q(1)}|u|^{20/7}dxdt< \varepsilon.
\ee
Inspired by the comment to \eqref{LLS} mentioned above, Kukavica's questions  and  recent progress \eqref{GP}-\eqref{JWZ}, we try to prove the following result
 \begin{theorem}\label{the1.2}
		Let  the pair $(u,  \Pi)$ be a suitable weak solution to the 3D Navier-Stokes system \eqref{NS} in $Q(1)$. \wgr{For any  $\delta>0$, there exists an absolute positive constant $\varepsilon$
		such that if   $u$ satisfies} \be\label{wwz}
 \iint_{Q(1)}|u|^{\f{5}{2}+\delta}dxdt\leq \varepsilon,\ee
		then, $u\in L^{\infty}(Q(1/16)).$
	\end{theorem}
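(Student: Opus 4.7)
Since $5/2+\delta$ is strictly below the exponent $20/7$ covered by \eqref{JWZ}, the natural strategy is a bootstrap: use the smallness in $L^{5/2+\delta}$ to upgrade the integrability of $u$ on a slightly smaller cylinder up to the point where \eqref{JWZ} (or \eqref{wolfchae}) applies. Because the hypothesis does not involve $\Pi$, the upgrade must be performed in the pressure-free framework initiated in \cite{[Wolf10],[CW]} and developed in \cite{[JWZ]}.

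First I would apply Wolf's local pressure projection on a ball $B$ containing the spatial slice of the working cylinder: decompose $\Pi=p_{h,B}+p_{1,B}+p_{2,B}$, where $p_{1,B}$ comes from the quadratic nonlinearity, $p_{2,B}$ from the heat part, and $p_{h,B}$ is harmonic in $B$. Introducing the modified velocity $v=u-\nabla\phi$ with $\phi$ generated by the harmonic part, the pair $(v,p_{1,B}+p_{2,B})$ solves a local Stokes-type system in which the nonlocal pressure $\Pi$ has been eliminated, and one can write the corresponding pressure-free local energy inequality as in \cite{[JWZ]}.

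Next I would derive a Caccioppoli-type inequality on concentric cylinders $Q'\subset Q\subset Q(1)$ of the schematic form
\begin{equation*}
\sup_{t\in I'}\int_{B'}|v|^{2}\,dx+\iint_{Q'}|\nabla v|^{2}\,dxdt\lesssim \iint_{Q}\bigl(|u|^{2}+|u|^{s}\bigr)dxdt+\|p_{1,B}\|_{L^{s/2}(Q)}^{s/2}+\text{(harmonic tails)},
\end{equation*}
where $s$ is the current integrability exponent of $u$ available on $Q$; interior estimates for harmonic functions absorb the tails, while Calder\'on--Zygmund bounds $p_{1,B}$ by $\|u\|_{L^{s}}^{2}$. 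Given $u\in L^{s}(Q)$, the nonstationary Stokes estimate for $v$ with forcing $u\otimes u\in L^{s/2}$ yields $\nabla v\in L^{s/2}$ on a smaller cylinder; Sobolev embedding in space, interpolated against the energy bound above, then raises the exponent of $u$ from $s$ to some $s^{*}>s$ with a quantitative gap.

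Iterating this step a finite number of times starting from $s_0=5/2+\delta$ pushes $s$ past $20/7$, at which point \eqref{JWZ} applied on a suitably small sub-cylinder gives $u\in L^{\infty}$, and the chain of cylinders can be arranged so that the innermost one contains $Q(1/16)$. The main obstacle is the book-keeping in this iteration: each step costs a little radius and inflates the constant, so one must track radii, constants, and the smallness threshold $\varepsilon$ carefully so that the process terminates in a number of steps depending only on $\delta$, and that the smallness at scale one is strong enough to propagate through all iterates without ever reintroducing a cubic term that cannot be controlled by $L^{5/2+\delta}$.
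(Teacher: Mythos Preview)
Your proposal has the right framework (Wolf's local pressure projection, the modified velocity $v$, a pressure-free local energy inequality) but skips precisely the step where the difficulty lies, and then proposes a bootstrap mechanism that does not actually gain anything.

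The real obstacle is the Caccioppoli inequality itself. In the local energy inequality for $v$, the transport term $\int |v|^{2}u\cdot\nabla\phi$ and the pressure term $\int \Pi_{2}\,v\cdot\nabla\phi$ are essentially cubic in $u$; producing a right-hand side controlled by $\int|u|^{s}$ with $s=5/2+\delta<3$ is not automatic, and your schematic inequality simply asserts it. The paper devotes all of Section~3 to this: one tests with $\phi^{2\beta}$ for the specific exponent $\beta=1/(\alpha-1)$ so that $|v|^{3}\phi^{2\beta-1}=|v\phi^{\beta}|^{3-\alpha}|v|^{\alpha}$, and then the interpolation of Lemma~\ref{zc} splits off a factor $\bigl(\|v\phi^{\beta}\|_{L^{\infty,2}}+\|\nabla(v\phi^{\beta})\|_{L^{2}}\bigr)^{3-\alpha}$ which is absorbed on the left by Young. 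For $\Pi_{2}$ one writes a representation for $\eta\Pi_{2}$ with the cutoff $\eta=\phi^{\beta-1}$ built in (rather than for $\Pi_{2}$ alone), so that the Calder\'on--Zygmund piece carries the matching power of $\phi$ and is absorbed the same way. Without this matching-of-exponents device, the cubic terms cannot be reduced to subcubic ones and your Caccioppoli does not close.

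Your second step is also off. The Stokes estimate you invoke gives $\nabla v\in L^{s/2}$ with $s/2\approx 5/4$, which is \emph{weaker} than the $\nabla v\in L^{2}$ already delivered by the energy inequality; interpolating it against the energy bound adds nothing. In fact, once the Caccioppoli is in hand, no regularity iteration is needed at all: the energy control gives $v\in L^{\infty,2}\cap L^{2}H^{1}$, hence $v\in L^{10/3}$ with small norm on a subcylinder, and since $\nabla\Pi_{h}$ is harmonic (hence bounded interiorly by $\|u\|_{L^{5/2+\delta}}$), one gets $u\in L^{10/3}$ small and can invoke \eqref{JWZ} or even Wolf's $L^{3}$ criterion directly. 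The paper takes a different route after the Caccioppoli---a Caffarelli--Kohn--Nirenberg-style induction on dyadic scales (Proposition~\ref{keyinindu}) with the backward heat kernel as test function, yielding $v\in L^{\infty}$ via Lebesgue differentiation, then $u\in L^{(5+2\delta)/2,\infty}$ from the harmonic estimate, then Serrin---but either closing argument is routine compared to establishing the Caccioppoli, which is where your outline is missing the key idea.
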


\begin{remark}
\wgr{This theorem is an improvement of corresponding results in \eqref{GP}-\eqref{wolfchae}.}
\end{remark}

An outline of \wgr{the proof for} Theorem 1.1 is as follows.
In the first step, following the path of \cite{[CW],[JWZ],[Wolf10]},  one establishes the Caccioppoli type inequalities just in terms of $u$ via the local energy inequality \eqref{wloc1}\wgr{(see Section 2 for more notations and details).}
 As mentioned in \cite{[Wolf10],[W15],[CW]}, the advantage of  local energy
inequality \eqref{wloc1}
  removed   the non-local effect of the pressure term. However,
 as stated in \cite{[JWZ]}, the cost of local energy inequality \eqref{wloc1} without
non-local pressure
is that the    velocity field $u$ is lack of  the  kinetic energy $\|u\|_{L^{\infty,2}}$. As observed in \cite{[JWZ]}, \wgr{$v=u+\nabla \Pi_{h}$} enjoys all the energy, namely, $\|v\|_{L^{\infty}L^{2}}$ and $\|v\|_{L^{2}L^{2}}$, where $\Pi_{h}$ is a harmonic function. However, since \wgr{$v$ depends on} the radius  $r$,  one can not construct iteration in terms of  $v$. This leads to the main difficulty in construction of Caccioppoli type inequality. The key point is the full application of $u=v-\nabla \Pi_{h}$ and
\wgr{absorbtion of} $v$ in the right hand side  by the left hand in local energy inequality \eqref{wloc1}.
Compared with the \wgr{proof in} \cite{[JWZ]}, we need to absorb  more than one $v$ in the right hand side  by the left hand in local energy inequality \eqref{wloc1}. After carefully choosing the suitable text function in \eqref{wloc1} together with the interpolation inequality \eqref{zw}, we can treat the term $\iint|v|^{3}\phi^{2\beta-1}$\wgr{(see Section 3 for more notations and details)}.
Since $\Pi_{2}$ meets $\Delta \Pi_{2}=-\wgr{\mathrm{divdiv}}(u\otimes u)$, a natural method is \wgr{using the} representation of $\Pi_{2} $ to bound $\iint  |\Pi_{2}\phi^{\beta-1}||v\phi^{\beta}| $.
Indeed, to the knowledge of the authors, it is worth remarking that we will utilize the representation of $\Pi_{2} $ with the test function rather than the pressure only used in all previous work. This guarantees that the representation of $\Pi_{2} $ and the integration domain in this term is consistent.  This is \wgr{of independent interest}.   Then we obtain the desired Caccioppoli type inequalities \eqref{wwzc}.

In the second step,
we  use Caccioppoli type inequalities \eqref{wwzc} and induction arguments \wgr{developed in} \cite{[CKN],[TY],[RWW],[JWZ],[CW]}  to \wgr{prove Theorem 1.1}.
In contrast with \wgr{the previous argument},
 it should be noted that there exist two difficult terms to bound   in local energy inequality \eqref{wloc1}  since the     integration of time in \eqref{wwz}   is just $\f{5+2\delta}{2}$\wgr{(see \eqref{loc21} for more details)}. The first one is
 $\int^{t}_{-r^{2}_{k_{0}}}\int_{B_{1}}\Gamma\phi  v\otimes\nabla\Pi_{h}:\nabla^{2}\Pi_{h}   $, which will be bounded by the  introduction of  new quantity in \eqref{goal} in induction arguments. To control the second one $\int^{t}_{-r^{2}_{k_{0}}}\int_{B_{1}}  \Pi_{2}v\cdot\nabla(\Gamma\phi )$,  we decompose $\Pi_2$ into \wgr{two parts} (see \eqref{2.4.10}-\eqref{2.4.11}) and \wgr{make use of} Lemma \ref{CW}
to obtain the desired estimates.

Next we turn attention to the Caccioppoli type inequality,
For \wgr{the reader's 	convenience, before} we formulate our  proposition,  we recall the \wgr{known results} proved in \cite{[W15],[CW],[JWZ]}, respectively,
  \begin{align}
 &\|u\|^{2}_{L^{3,\f{18}{5}}Q(\f{1}{2})}+ \|\nabla u\|^{2}_{L^{2}(Q(\f{1}{2}))}
 \leq
  C \|  u\|^{2}_{L^{3}(Q(1))}+C\|  u\|^{3}_{L^{3}(Q(1))}\wgr{,}
   \label{Wolf10}\\
   \label{cw}
 &\|u\|^{2}_{L^{3,\f{18}{5}}Q(\f{1}{2})}+  \|\nabla u\|^{2}_{L^{2}(Q(\f{1}{2}))}
 \leq
  C \|  u\|^{2}_{L^{\f{3q}{2q-3},q}(Q(1))}+C\|  u\|^{\f{3q}{2q-3}}_{L^{\f{3q}{2q-3},q}(Q(1))},~~~\f32<q\leq3\wgr{,}\\
  & \|u\|^{2}_{L^{\f{20}{7},\f{15}{4}}Q(\f{1}{2})}+ \|\nabla u\|^{2}_{L^{2}(Q(\f{1}{2}))}
 \leq
  C \|  u\|^{2}_{L^{\f{20}{7}}(Q(1))}+C\|  u\|^{4}_{L^{\f{20}{7}}(Q(1))}. \label{jwz}
 \end{align}

\begin{prop}\label{prop1.2}
Assume that $u$ is a   suitable weak \wgr{solution to the Navier-Stokes equations, $3/2\leq2/p+3/q<2$} with $p\geq2,q\geq 12/5$. \wgr{Then we have, for any $R>0$,}
\be\ba
&\|u\|^{2}_{L^{2,6}(Q(\f{R}{2}))}+ \|\nabla u\|^{2}_{L^{2}(Q(\f{R}{2}))} \\
\leq &
  CR^{\f{3\alpha-4}{\alpha}}\|  u\|^{2}_{L^{p,q}(Q(R))}
  +CR^{ \f{5\alpha-8}{\alpha}}\|  u\|^{4}_{L^{ p,q} (Q(R))}
  +CR^{\f{3\alpha-5}{\alpha-1}}\|  u\|^{\f{2\alpha}{\alpha-1}}_{L^{p,q}(Q(R))}\wgr{,}
  \label{wwzc}
  \ea\ee
\wgr{where $\alpha=\frac{2}{\frac{2}{p}+\frac{3}{q}}$.}
 \end{prop}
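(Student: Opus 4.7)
The strategy is to combine the local pressure projection approach of \cite{[Wolf10]} with the interpolation technique used in \cite{[JWZ],[CW]}, adapted to the mixed Lebesgue class $L^{p,q}$. Decompose the pressure as $\Pi=\Pi_h+\Pi_2$, where $\Pi_h$ is harmonic on $B_R$ and $\Pi_2$ solves $\Delta\Pi_2=-\mathrm{divdiv}(u\otimes u)$ with zero boundary trace, and set $v:=u+\nabla\Pi_h$. The modified field $v$ enjoys the full energy $\|v\|_{L^{\infty,2}}+\|\nabla v\|_{L^{2,2}}$ and satisfies the local energy inequality \eqref{wloc1} without the non-local pressure. Testing \eqref{wloc1} with $\phi^{2\beta}$ for a parabolic cut-off $\phi$ localised to $Q(R/2)\subset Q(R)$, the left hand side becomes
$$\|v\phi^\beta\|_{L^{\infty,2}}^2+\|\nabla(v\phi^\beta)\|_{L^{2,2}}^2,$$
and Sobolev embedding upgrades this to control of $\|v\|_{L^{2,6}(Q(R/2))}^2+\|\nabla v\|_{L^{2,2}(Q(R/2))}^2$. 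Converting $v$ back to $u$ via the harmonic estimate $\|\nabla\Pi_h\|_{L^\infty(B_{R/2})}\lesssim R^{-1-3/q}\|u\|_{L^q(B_R)}$ then recovers the left hand side of \eqref{wwzc}.

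The first main task is estimating the convection term $\iint|v|^3\phi^{2\beta-1}|\nabla\phi|$ on the right of the tested energy inequality. Substituting $v=u-\nabla\Pi_h$ and applying the interpolation inequality \eqref{zw} between the LHS energy quantities and $\|u\|_{L^{p,q}}$, one arrives at a bound of the form
$$\tfrac{1}{2}\bigl(\|v\phi^\beta\|_{L^{2,6}}^2+\|\nabla(v\phi^\beta)\|_{L^{2,2}}^2\bigr)+CR^{(3\alpha-4)/\alpha}\|u\|_{L^{p,q}}^2+CR^{(5\alpha-8)/\alpha}\|u\|_{L^{p,q}}^4,$$
with $\alpha=2/(2/p+3/q)$. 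Because the class is supercritical (the scaling exponent $2/p+3/q$ lies strictly above $1$), the interpolation must be iterated so that \emph{more than one} $v$-factor in the cubic term is absorbed into the LHS; this is the main additional step compared with \cite{[JWZ]}, where a single absorption sufficed.

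The second delicate contribution is the pressure term $\iint|\Pi_2|\,|v|\,\phi^{2\beta-1}|\nabla\phi|$. As emphasised in the introduction, the key is to expand $\Pi_2$ via its Newtonian representation $\partial_i\partial_j G\ast(u_iu_j)$ \emph{together with the cutoff $\phi$ placed inside the convolution} rather than treating $\|\Pi_2\|_{L^r}$ in isolation, so that the spatial support of the representation matches the domain of integration. A Calder\'on--Zygmund bound in space combined with H\"older in time then yields an estimate of the form $C\|u\|^2_{L^{p,q}}\|v\|_{L^{\sigma,\tau}}$ for suitable $(\sigma,\tau)$ interpolating between the LHS norms, and a final Young inequality with an $R$-weight chosen according to $\alpha$ produces the third term $CR^{(3\alpha-5)/(\alpha-1)}\|u\|^{2\alpha/(\alpha-1)}_{L^{p,q}}$. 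The principal obstacle I anticipate is the simultaneous bookkeeping of the scaling exponents: one must pick $\beta$ and the three Young weights so that each absorption produces exactly the correct $R$-power in $\alpha$ and no spurious remainders. This is precisely why three distinct powers of $\|u\|_{L^{p,q}}$ (with exponents $2$, $4$, and $2\alpha/(\alpha-1)$) appear in the final statement, each carrying its own sharp $R$-weight.
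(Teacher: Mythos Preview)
Your overall strategy matches the paper's: Wolf's local pressure projection, testing the local energy inequality with $\phi^{2\beta}$, the interpolation Lemma~\ref{zc}, and the representation of $\Pi_2$ with the cut-off placed inside the convolution. However, there are two genuine gaps.

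First, your decomposition $\Pi=\Pi_h+\Pi_2$ omits the piece $\Pi_1$ defined by $\nabla\Pi_1=\mathcal{W}_{p,B(\rho)}(\Delta u)$. This term is present in the local energy inequality \eqref{wloc1} and, after applying \eqref{p1} and Young's inequality, contributes $\tfrac{1}{16}\|\nabla u\|_{L^2(Q(\rho))}^2$ to the right-hand side. Note this is $\nabla u$, not $\nabla v$, so it cannot be absorbed directly into the left-hand side, which controls only $\nabla v$.

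Second, and consequently, the argument requires an iteration step (Giaquinta's Lemma~\ref{iter1}) that you do not mention. The paper works with two intermediate radii $R/2\leq r<\rho\leq R$, derives an inequality of the schematic form
\[
\|u\|_{L^{2,6}(Q(r))}^2+\|\nabla u\|_{L^2(Q(r))}^2\leq \sum_j C_j(\rho-r)^{-a_j}\rho^{b_j}\|u\|_{L^{p,q}(Q(R))}^{c_j}+\tfrac{1}{16}\|\nabla u\|_{L^2(Q(\rho))}^2,
\]
and only then applies Lemma~\ref{iter1} to absorb the last term and collapse the $(\rho-r)$ powers. The same two-radius setup is also needed for your step ``converting $v$ back to $u$'': the estimates $\|\nabla^2\Pi_h\|_{L^2(Q(r))}\lesssim(\rho-r)^{-5/2}\|\nabla\Pi_h\|_{L^2(Q(\rho))}$ and $\|\nabla\Pi_h\|_{L^{2,6}(Q(r))}\lesssim(\rho-r)^{-3/2}\|\nabla\Pi_h\|_{L^2(Q(\rho))}$ require room between $r$ and $\rho$. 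With $\phi$ localised directly to $Q(R/2)\subset Q(R)$ as you propose, there is no mechanism to close the argument.

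A minor point of bookkeeping: in the paper the term $\|u\|_{L^{p,q}}^{2\alpha/(\alpha-1)}$ arises from \emph{both} the cubic convection piece $\iint|v|^3\phi^{2\beta-1}$ and the leading Calder\'on--Zygmund part of the $\Pi_2$ representation, in each case via the interpolation \eqref{zw} combined with the specific choice $\beta=1/(\alpha-1)$ (so that $2\beta-1=\beta(3-\alpha)$ and $\beta-1=\beta(2-\alpha)$). It does not originate solely from the pressure term.
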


 We present two application of new   $\varepsilon$-regularity criterion \eqref{wwz} \wgr{without pressure} at one scale.
This is in part motivated  by recent works  \cite{[RWW18],[WW18]}, where the authors found that at one scale it is useful to establish  new $\varepsilon$-regularity criterion to  obtain better box dimension and the improvement of  the classical Caffarelli-Kohn-Nirenberg theorem by a logarithmic factor.
Box dimension (Minkowski dimension)
is
a widely used fractal dimension\wgr{(see Section 5 for the definition)}.
The relationship between Hausdorff dimension and   box dimension is that the   former is less than the latter (see e.g. \cite{[Falconer]}). More \wgr{information} on     box  dimension can
be found in  \cite{[Falconer]}. 
Making use of Theorem 1.1 and following the path of \cite{[KY16],[WW2]}, one can derive that the (upper) box dimension of \wgr{the singular points set} $\mathcal{S}$ is at most $37/30(\approx1.23).$  This improves the previous box dimension of   $\mathcal{S}$ obtained in \cite{[KP],[RS2],[Kukavica],[KY16],[WW2]}. We leave the proof to the interested author. Indeed, the proof   is simple than that of \cite{[WW2],[HWZ],[RWW],[KY16]} owing to $\varepsilon$-regularity criterion \eqref{wwz} without pressure holds at one scale.
Finally, we are concerned with the improvement  of the
 Caffarelli-Kohn-Nirenberg theorem by a logarithmic factor.
 In  \cite{[CL]}, Choe and Lewis introduced the generalized Hausdorff measure  $\Lambda(\mathcal{S},r(\log(e/r))^{\sigma})$  (for the detail, \wgr{see Section 6)}    and proved that \be\label{cl}\Lambda(\mathcal{S},r(\log(e/r))^{\sigma})=0  (0\leq\sigma<3/44).\ee
 \eqref{cl} with $\sigma=0$ reduces to
the celebrated
Caffarelli-Kohn-Nirenberg theorem for the three-dimensional time-dependent Navier-Stokes system. Recently, there are some efforts to improve the bound of $\sigma$ in \eqref{cl}.
  $\sigma$ is bounded by  $1/6$ by Choe and Yang in  \cite{[CY1]}. Later,
Ren, Wang and  Wu \wgr{\cite{[RWW18]}} improved  the bound of $\sigma$ to $27/113$.
Inspired by   the new $\varepsilon$-regularity criterion \eqref{wwz}, we have the following result
\begin{theorem}\label{the1.5}
 Let $\mathcal{S}  $ stand   for  the set of all the potential interior singular \wgr{points} of suitable weak solutions  to \eqref{NS} and $0\leq\sigma<4/11$. There holds
$$\Lambda(\mathcal{S},r(\log(e/r))^\sigma)=0.$$
\end{theorem}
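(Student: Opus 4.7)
\textbf{Proof plan for Theorem \ref{the1.5}.} The plan is to combine the new one-scale criterion of Theorem \ref{the1.2} with a Vitali-type covering argument refined by dyadic summation, following the strategy of \cite{[CL],[CY1],[RWW18]}. First I will use Theorem \ref{the1.2} together with parabolic rescaling to characterize the singular set: every $(x,t)\in\mathcal{S}$ satisfies, for all sufficiently small $r$,
\begin{equation*}
\iint_{Q(x,t;r)} |u|^{5/2+\delta}\, dx\, dt \;>\; \varepsilon\, r^{5/2-\delta},
\end{equation*}
with $\delta>0$ a small parameter to be chosen at the end. Indeed, if this inequality failed at some $r$, Theorem \ref{the1.2} would force $u$ to be bounded in a neighborhood of $(x,t)$.

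Second, I will interpolate the left-hand side against the local kinetic energy and dissipation
\begin{equation*}
A(r):=\sup_{-r^2<s<0}\|u(s)\|_{L^2(B(r))}^2,\qquad E(r):=\iint_{Q(r)}|\nabla u|^2\, dx\, dt,
\end{equation*}
using Gagliardo--Nirenberg together with the 3D Sobolev embedding $H^1\hookrightarrow L^6$, applied with a cutoff so the Poincar\'e correction is absorbed. A scaling check forces the resulting inequality into the form
\begin{equation*}
\iint_{Q(r)} |u|^{5/2+\delta}\, dx\, dt \;\le\; C\, r^{5/4-3\delta/2}\, A(r)^{(7-2\delta)/8}\, E(r)^{3/8+3\delta/4},
\end{equation*}
so that combining with the first step, every singular point obeys the pointwise bound $r^{5/4+\delta/2}\le C A(r)^{(7-2\delta)/8} E(r)^{3/8+3\delta/4}$ at all small $r$.

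Third, I will run the covering argument. For each $\rho>0$, use the preceding bound to extract a Vitali family of parabolic cylinders $\{Q(x_i,t_i;r_i)\}$ with $r_i\le\rho$ whose five-fold dilates cover $\mathcal{S}$ while the originals are pairwise disjoint. Split this family into dyadic levels $r_i\in[2^{-k-1},2^{-k})$ and, on each level, estimate $\sum_{i\in I_k} r_i(\log(e/r_i))^\sigma$ by a Hölder inequality against the distributions of $A(r_i)$, $E(r_i)$, and $\iint_{Q_i}|u|^{5/2+\delta}$, then sum over $k$. The finiteness of the parent measure $\iint(|u|^2+|\nabla u|^2+|u|^{5/2+\delta})\, dx\, dt$ on any bounded parabolic subdomain, combined with absolute continuity of that measure as $\rho\to 0^+$, will force $\sum_i r_i(\log(e/r_i))^\sigma\to 0$, which is exactly the claim $\Lambda(\mathcal{S},r(\log(e/r))^\sigma)=0$.

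The main obstacle is the exponent arithmetic in the final Hölder estimate. Balancing the weights $(7-2\delta)/8$ and $3/8+3\delta/4$ on $A$ and $E$ against the logarithmic weight $\sigma$ and the dyadic scale $2^{-k}$, and then sending $\delta\to 0^+$, is what singles out the threshold $\sigma<4/11$. Verifying that the optimization genuinely closes at $4/11$ rather than at some strictly smaller value, and that the interpolation of the second step can be carried out with the needed precision once the cutoff contributions are absorbed, is the delicate part of the proof; it is precisely there that the sharper exponent $5/2+\delta$ furnished by Theorem \ref{the1.2} improves the bound $27/113$ of \cite{[RWW18]}.
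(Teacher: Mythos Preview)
Your interpolation in Step~2 is correct, and Step~1 is the standard rescaled consequence of Theorem~\ref{the1.2}. The genuine gap is in Step~3. The quantity $A(r_i)=\sup_{s}\int_{B(x_i,r_i)}|u(s)|^2$ is a supremum in time and is \emph{not} additive over disjoint parabolic cylinders; there is no inequality of the form $\sum_i A(r_i)\le C\|u\|_{L^\infty_tL^2_x}^2$. Using the crude uniform bound $A(r_i)\le M$ instead, your pointwise estimate $r_i^{5/4+\delta/2}\le C A_i^{(7-2\delta)/8}E_i^{3/8+3\delta/4}$ collapses to $E_i\ge c\,r_i^{10/3}$, which only controls a $\tfrac{10}{3}$-dimensional content of $\mathcal S$---far worse than the target. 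No H\"older splitting over dyadic shells repairs this, because on each shell you still need a summable bound on a power of $A_i$, and none is available without further input.

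The paper's proof supplies exactly this missing input. It does not run a direct Vitali argument; instead it invokes the framework of \cite{[RWW18]} (recorded here as Proposition~\ref{add}), which reduces the claim to establishing the lower bound of Lemma~\ref{J-b control}: for singular points $(x,t)\in F(m)$, $\liminf_{r\to0}J_q(r)\,m(r)^\tau\ge c_2$ with $\tau=\tfrac{35-14q}{4}$. The crucial device is Lemma~6.1 of Choe--Lewis \cite{[CL]}, which gives the a~priori logarithmic control $r^{-1}A(r)=E(r)\le c_1 m(r)^2$ at singular points in $F(m)$. This converts the non-additive kinetic energy into a harmless logarithmic weight, after which the $\varepsilon$-criterion of Theorem~\ref{the1.2} (applied with $\delta=0$) yields Lemma~\ref{J-b control}; taking $q=2$ gives $\tau=7/4$ and hence $\sigma<\tfrac{1}{\tau+1}=\tfrac{4}{11}$. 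If you want a self-contained argument along your lines, you must either reprove or import the Choe--Lewis bound on $E(r)/m(r)^2$; without it the covering step does not close.
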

\begin{remark}
Theorem \ref{the1.5} is an improvement of the known  corresponding results in  \wgr{\cite{[CL],[CY1],[RWW18]}}.
\end{remark}

The remainder of this paper  is structured as follows. \wgr{In Section} \ref{sec2}, we start with \wgr{the details} of   Wolf's the local pressure projection $\mathcal{W}_{p,\Omega}$ and recall the definition of local suitable weak solutions due to \cite{[Wolf10],[W15]}. Then, we   establish \wgr{some auxiliary lemmas}.
 The Caccioppoli type inequality \eqref{wwzc} is derived in
  Section \ref{sec3}.
 Section \ref{sec4} is devoted to the proof of Theorem \ref{the1.2}.
 Finally, we consider the \wgr{improvements on} Caffarelli--Kohn--Nirenberg theorem by a logarithmic factor
in Section \ref{sec6}.

{\bf Notations:}
Throughout this paper, we denote
\begin{align*}
     &B(x,\mu):=\{\wgr{y\in \mathbb{R}^{3}}||x-y|\leq \mu\}, && B(\mu):= B(0,\mu), && \tilde{B}(\mu):=B(x_{0},\,\mu),\\
        &Q(x,t,\mu):=B(x,\,\mu)\times(\wgr{t-\mu^{2}}, t),  && Q(\mu):= Q(0,0,\mu), && \tilde{Q}(\mu):= Q(x_{0},t_{0},\mu),\\
          &r_{k}=2^{-k},\quad &&\tilde{B}_{k}:= \tilde{B}(r_{k}), \quad ~~ &&\tilde{Q}_{k}:=\tilde{Q}(r_{k}).
\end{align*}
For $p\in [1,\,\infty]$, the notation $L^{p}((0,\,T);X)$ stands for the set of measurable functions on the interval $(0,\,T)$ with values in $X$ and $\|f(t,\cdot)\|_{X}$ belongs to $L^{p}(0,\,T)$.
  For simplicity,   we write $$\|f\| _{L^{p,q}(Q(r))}:=\|f\| _{L^{p}(-r^{2},0;L^{q}(B(r)))}~~   \text{and}~~
  \|f\| _{L^{p}(Q(r))}:=\|f\| _{L^{p,p}(Q(r))}\wgr{.} $$
\wgr{We also denote
\begin{equation*}
\begin{split}
  E(\mu)=\mu^{-1}\|u\|_{L^{\infty,2}(Q(\mu))}^{2}, &\qquad E_{*}(\mu)=\mu^{-1}\|\nabla u\|_{L^{2}(Q(\mu))}^{2},\\
  E_{p}(\mu)=\mu^{p-5}\|u\|_{L^{p}(Q(\mu))}^{p}, &\qquad J_{p}(\mu)=\mu^{2p-5}\|\nabla u\|_{L^{p}(Q(\mu))}^{p}.
\end{split}
\end{equation*}
}
\wgr{In addition, denote}
  the average of $f$ on the set $\Omega$ by
  $\overline{f}_{\Omega}$. For convenience,
  $\overline{f}_{r}$ represents  $\overline{f}_{B(r)}$ and $\overline{\Pi}_{\tilde{B}_{k}}$
  is denoted by $\tilde{\Pi}_{k}$.
  $|\Omega|$ represents the Lebesgue measure of the set $\Omega$.
  For exponent $p\in [1,\infty)$, we define the H\"older conjugate $p^{\ast}$ through the relation  $1/p^{\ast}=1-1/p$.
  We will use the summation convention on repeated indices.
 $C$ is an absolute constant which may be different from line to line unless otherwise stated in this paper.

\section{Preliminaries}\label{sec2}

We begin with \wgr{the Wolf's} local pressure projection $\mathcal{W}_{p,\Omega}:$ $W^{-1,p}(\Omega)\rightarrow W^{-1,p}(\Omega)$ $(1<p<\infty)$.
 More precisely, for any  $f\in W^{-1,p}(\Omega)$, we define \wgr{$\mathcal{W}_{p,\Omega}(f)= \nabla\Pi$}, where $\Pi$ satisfies \eqref{GMS}.
Let $\Omega$  be a  bounded domain with $\partial\Omega\in C^{1}$.
According to the $L^p$ theorem of Stokes system in \cite[Theorem 2.1, p149]{[GSS]},
there exists a unique pair $(u,\Pi)\in W^{1,p}(\Omega)\times L^{p}(\Omega)$ such that
\be\label{GMS}
-\Delta u+\nabla\Pi=f,~~ \text{div}\,u=0, ~~u|_{\partial\Omega}=0,~~ \int_{\Omega}\Pi dx=0.
\ee
Moreover, this pair is subject to the inequality
$$
\|u\|_{\wgr{W^{1,p}}(\Omega)}+\|\Pi\|_{\wgr{L^p}(\Omega)}\leq C\|f\|_{\wgr{W^{-1,p}}(\Omega)}.
$$
Let $\nabla\Pi= \mathcal{W}_{p,\Omega}(f)$ $(f\in L^p(\Omega))$, then $\|  \Pi\|_{L^p(\Omega)}\leq C\|f\|_{L^p(\Omega)},$ where we used the fact that $L^{p}(\Omega)\hookrightarrow W^{-1,p}(\Omega)$.  Moreover, from $\Delta \Pi=\text{div}\,f$, we see that $\|  \nabla\Pi\|_{L^p(\Omega)}\leq C(\|f\|_{L^p(\Omega)}+ \|  \Pi\|_{L^p(\Omega)}) \leq C\|f\|_{L^p(\Omega)}.$
Now, we  present the definition of suitable weak solutions of Navier-Stokes equations \eqref{NS}.
	\begin{definition}\label{defi}
		A  pair   $(u, \,\Pi)$  is called a suitable weak solution to the Navier-Stokes equations \eqref{NS} provided the following conditions are satisfied,
		\begin{enumerate}[(1)]
			\item $u \in L^{\infty}(-T,\,0;\,L^{2}(\mathbb{R}^{3}))\cap L^{2}(-T,\,0;\,\dot{H}^{1}(\mathbb{R}^{3})),\,\Pi\in
			L^{3/2}(-T,\,0;L^{3/2}(\mathbb{R}^{3}));$\label{SWS1}
			\item$(u, ~\Pi)$~solves (\ref{NS}) in $\mathbb{R}^{3}\times (-T,\,0) $ in the sense of distributions;\label{SWS2}
			\item The local energy inequality reads,
for a.e. $t\in[-T,0]$ and non-negative function $\phi(x,s)\in C_{0}^{\infty}(\mathbb{R}^{3}\times (-T,0) )$,
			 \begin{align}
  &\int_{B(r)}|v|^2\phi (x,t)  d  x+ \int^{t}_{-T }\int_{B(r)}\big|\nabla v\big|^2\wgr{\phi (x,s) dx ds}\nonumber\\  \leq&   \int^{t}_{-T }\int_{B(r)} | v |^2(  \Delta \phi +  \partial_{t}\phi )  d  x d s +\int^{t}_{-T }\int_{B(r)}|v|^{2}u\cdot\nabla \phi    dsds\nonumber\\
& +\int^{t}_{-T }\int_{B(r)} \phi ( u\otimes v :\nabla^{2}\Pi_{h} ) \wgr{dxds}   +\int^{t}_{-T }\int_{B(r)} \phi \Pi_{1}v\cdot\nabla \phi   dxds+\int^{t}_{-T }\int_{B(r)} \phi \Pi_{2}v\cdot\nabla \phi   dxds.\label{wloc1}
 \end{align}
\wgr{Here, $\nabla\Pi_h=-\mathcal{W}_{p,B(R)}(u)$, $\nabla\Pi_1=\mathcal{W}_{p,B(R)}(\Delta u)$, $\nabla\Pi_2=-\mathcal{W}_{p,B(R)}(u\cdot\nabla u)$, $v=u+\nabla\Pi_h$.}
 In addition, $ \nabla\Pi_{h}, \nabla\Pi_{1}$ and $\nabla\Pi_{2}$ meet the following \wgr{facts}
	\begin{align}   &\|\nabla\Pi_{h}\|_{L^p(B(R))}\leq  \wgr{C}\|u\|_{L^p(B(R))}, \label{ph}\\
 &\|\wgr{\Pi_{1}}\|_{L^2(B(R))}\leq \wgr{C} \|\nabla u\|_{L^2(B(R))},\label{p1}\\
 &\|\wgr{\Pi_{2}}\|_{L^{p/2}(B(R))}\leq \wgr{C} \| |u|^{2}\|_{L^{p/2}(B(R))}.\label{p2}
\end{align}	\end{enumerate}
	\end{definition}

We list some
interior estimates
of  harmonic functions $\Delta h=0$, which will be frequently utilized later. Let $1\leq p,q\leq\infty$ and \wgr{$0<r<\rho$}, then, it holds
\be\label{h1}\|\nabla^{k}h\|_{L^{q}
(B(r))}\leq \f{Cr^{\f{3}{q}}}{(\rho-r)^{\f{3}{p}+k}}\|h\|_{L^{p}(B(\rho))}\wgr{,}\ee
\be\label{h2}
 \| h-\overline{h}_{r}\|_{L^{q}
(B(r))}\leq \f{Cr^{\f{3}{q}+1}}{(\rho-r)^{\f{3}{q}+1 }}\|h-\overline{h}_{\rho}\|_{L^{q}(B(\rho))}.\ee
 The proof of \eqref{h1} rests on the mean value property of harmonic functions. This together with
  mean value theorem leads to \eqref{h2}. We leave the \wgr{details} to the reader.

\wgr{For reader's convenience}, we recall an interpolation inequality.
For each $2\leq l\leq\infty$ and $2\leq k\leq6$ satisfying $\f{2}{l}+\f{3}{k}=\f{3}{2}$, according to the H\"{o}lder   inequality  and the Young  inequality, we know that
\begin{align}
\|u\|_{\wgr{L^{l,k}}(Q(\mu))}&\leq C\|u\|_{\wgr{L^{\infty,2}}(Q(\mu))}^{1-\f {2} {l}}\|u\|_{\wgr{L^{2,6}}(Q(\mu))}^{\f {2} {l}}\nonumber\\
&\leq C\|u\|_{\wgr{L^{\infty,2}}(Q(\mu))}^{1-\f {2} {l}}(\|u\|_{\wgr{L^{\infty,2}}(Q(\mu))}
+\|\nabla u\|_{L^2(Q(\mu))})^{\f {2} {l}}\nonumber\\
&\leq C (\|u\|_{\wgr{L^{\infty,2}}(Q(\mu))}
+\|\nabla u\|_{L^2(Q(\mu))}).\label{sampleinterplation}
\end{align}

\begin{lemma}\label{zc} Let $\wgr{1\leq 2/p+3/q <2}, 1\leq p,\,q\leq\infty $ and \wgr{$\alpha=\frac{2}{\frac{2}{p}+\frac{3}{q}}$}. There is an absolute constant $C$   such that
\be\ba\label{zw}
\|u\|_{L^{3}(Q(\rho))}^{3} \leq C \rho^{3(\alpha-1)/2} \|u\|_{L^{p,q}(Q(\rho))}^{\alpha}\Big(\|u\|_{\wgr{L^{\infty,2}}(Q(\rho))}^{2}+\|\nabla u\|_{L^{2}(Q(\rho))}^{2}\Big)^{(3-\alpha)/2}.
\ea\ee
\end{lemma}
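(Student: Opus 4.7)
The plan is to prove \eqref{zw} by splitting $|u|^3 = |u|^\alpha \cdot |u|^{3-\alpha}$, pairing the first factor against the $L^{p,q}$ norm and the second against the parabolic energy via \eqref{sampleinterplation}. A preliminary dimensional check is reassuring: under the Navier--Stokes scaling $u(x,t)\mapsto \lambda u(\lambda x,\lambda^2 t)$, the quantities $\|u\|_{L^{p,q}}$ (since $2/p+3/q=2/\alpha$) and $\|u\|_{L^{\infty,2}}+\|\nabla u\|_{L^2}$ are scale-invariant, while $\iint|u|^3\,dxdt$ carries $\rho$-dimension $3(\alpha-1)/2$ relative to the product on the right-hand side.

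The first step is Hölder's inequality in time with conjugates $(p/\alpha,\,p/(p-\alpha))$ and in space with conjugates $(q/\alpha,\,q/(q-\alpha))$, which directly gives
\[
\iint_{Q(\rho)}|u|^{3}\,dxdt \leq \|u\|_{L^{p,q}(Q(\rho))}^{\alpha}\,\|u\|_{L^{l,k}(Q(\rho))}^{3-\alpha},
\]
where $l=(3-\alpha)p/(p-\alpha)$ and $k=(3-\alpha)q/(q-\alpha)$. Using the identity $\alpha(2/p+3/q)=2$, a short computation yields $2/l+3/k=3/(3-\alpha)$, which strictly exceeds $3/2$ whenever $\alpha>1$. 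To reach a Navier--Stokes-critical pair, define $k'$ by $2/l+3/k'=3/2$; then $k\leq k'$ and Hölder's inequality in the spatial variable on $B(\rho)$ furnishes
\[
\|u\|_{L^{l,k}(Q(\rho))}^{3-\alpha} \leq C\,\rho^{3(3-\alpha)(1/k-1/k')}\,\|u\|_{L^{l,k'}(Q(\rho))}^{3-\alpha} = C\,\rho^{3(\alpha-1)/2}\,\|u\|_{L^{l,k'}(Q(\rho))}^{3-\alpha},
\]
so the claimed $\rho$-power materialises automatically.

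The second step applies the parabolic interpolation \eqref{sampleinterplation} to the pair $(l,k')$ satisfying $2/l+3/k'=3/2$, producing
\[
\|u\|_{L^{l,k'}(Q(\rho))} \leq C\bigl(\|u\|_{L^{\infty,2}(Q(\rho))}+\|\nabla u\|_{L^{2}(Q(\rho))}\bigr).
\]
Combining this with the first step and using the elementary inequality $(A+B)^{3-\alpha}\leq C(A^2+B^2)^{(3-\alpha)/2}$ collapses the two energy terms into the squared form on the right-hand side of \eqref{zw}.

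The main obstacle I anticipate is confirming that $(l,k')$ falls in the admissible range of \eqref{sampleinterplation}, namely $l\in[2,\infty]$ and $k'\in[2,6]$. The bound $k'\geq 2$ is automatic, and $k'\leq 6$ reduces to $l\geq 2$, which in turn reads $p(\alpha-1)\leq 2\alpha$ when $\alpha>1$. This is satisfied by the parameter pairs $(p,q)$ with $p\geq 2$ and $q\geq 12/5$ used in Proposition~\ref{prop1.2}, but for extreme $(p,q)$ in the full declared range $1\leq 2/p+3/q<2$ one may need to swap the roles of the space/time factors in the Hölder splitting, or rebalance via a three-piece decomposition matched simultaneously against $L^{p,q}$, $L^{\infty,2}$, and $L^{2,6}$; the underlying algebra is the same and the scaling exponent $3(\alpha-1)/2$ is forced by dimension, so no new idea is required beyond careful bookkeeping.
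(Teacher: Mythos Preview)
Your approach is essentially the paper's: both arrive at $\|u\|_{L^3}^3 \leq \|u\|_{L^{p,q}}^\alpha \|u\|_{L^{m,n}}^{3-\alpha}$ with $m=(3-\alpha)(p/\alpha)^*$, $n=(3-\alpha)(q/\alpha)^*$, then embed $L^{m,n}$ up to the critical line $2/l+3/k=3/2$ and invoke \eqref{sampleinterplation}. The difference is only in the embedding step. You H\"older in space alone (keeping $l=m$), which forces you to check $m\geq 2$; your verification that this holds in Proposition~\ref{prop1.2}'s range is correct, but for the full range stated in the lemma it can fail---for instance at $p=q=19/4$ (so $\alpha=19/10$) one has $m=n\approx 1.83<2$, and neither a pure space nor a pure time H\"older lands in the admissible segment $[2,\infty]\times[2,6]$. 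The paper instead H\"olders in \emph{both} variables simultaneously, scaling $(m,n)$ up by the common factor $2/(3-\alpha)$ to the target
\[
\bigl(2(p/\alpha)^*,\,2(q/\alpha)^*\bigr)=\bigl(2p/(p-\alpha),\,2q/(q-\alpha)\bigr);
\]
a direct check using $2/p+3/q=2/\alpha$ shows this pair always satisfies $2/l+3/k=3/2$ with $l\in[2,\infty]$ and $k\in[2,6]$ throughout $1\leq 2/p+3/q<2$. This single explicit choice is the ``bookkeeping'' you anticipated, and it removes the need for any case analysis or three-piece decomposition.
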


\begin{remark}
Lemma \ref{zc} is obtained in \cite{[HWZ]}. Here we present a different proof from that in \cite{[HWZ]}. New proof \wgr{allows one to apply} it to more general case.
\end{remark}
\begin{proof}
We denote
$$m=(3-\alpha)(\f{p}{\alpha})^{\ast},n=(3-\alpha)(\f{q}{\alpha})^{\ast}\wgr{.}$$
Thanks to the H\"older inequality, we find that
$$\ba
\|u^{\alpha} u^{2-\alpha} \|_{L^{m^{\ast},
n^{\ast}}}&\leq \|u^{\alpha}\|_{L^{\f{p}{\alpha},\f{q}{\alpha}}}
\|u^{2-\alpha} \|_{L^{\f{m}{2-\alpha},\f{n}{2-\alpha}}}\\
&\leq \wgr{\|u\|^{\alpha}_{L^{p,q}}
\|u \|^{2-\alpha}_{L^{m,n}}.}
\ea$$
Using the latter inequality, the H\"older inequality,
and \eqref{sampleinterplation},
 we infer that
\begin{align}
\|u\|^{3}_{L^{3}}
&\leq   \|u \|_{L^{m,
n}(Q(\rho))}
\|u^{\alpha} u^{2-\alpha}    \|_{L^{m^{\ast},
n^{\ast}}}
  \nonumber\\&\leq   \|u \|^{3-\alpha}_
  {L^{m,n}(Q(\rho))}
\|u \|^{\alpha}_{L^{p,q}}\nonumber\\
&\leq C\rho^{3(\alpha-1)/2}\|u\|_{L^{p,q}(Q(\rho))}^{\alpha}\|u\|^{3-\alpha}
_{L^{2(\f{p}{\alpha})^{\ast},2(\f{q}{\alpha})^{\ast}}(Q(\rho))}
\nonumber\\
&\leq C \rho^{3(\alpha-1)/2} \|u\|_{L^{p,q}(Q(\rho))}^{\alpha}\Big(\|u\|_{\wgr{L^{\infty,2}}(Q(\rho))}^{2}+\|\nabla u\|_{L^{2}(Q(\rho))}^{2}\Big)^{(3-\alpha)/2}.\label{2.2.20}
\end{align}
This completes                                     the proof.
\end{proof}
In additon, we recall two well-known iteration lemmas.		
	
\begin{lemma}\label{iter1}\cite[Lemma V.3.1,   p.161]{[Giaquinta]}
			Let $I(s)$ be a bounded nonnegative function in the interval $[r, R]$. Assume that for every $\sigma, \rho\in [r, R]$ and  $\sigma<\rho$ we have			$$I(\sigma)\leq A_{1}(\rho-\sigma)^{-\alpha_{1}} +A_{2}(\rho-\sigma)^{-\alpha_{2}} +A_{3}+ \ell I(\rho)$$
			for some non-negative constants  $A_{1}, A_{2}, A_{3}$, non-negative exponents $\alpha_{1}\geq\alpha_{2}$ and a parameter $\ell\in [0,1)$. Then there holds
			$$I(r)\leq c(\alpha_{1}, \ell) [A_{1}(R-r)^{-\alpha_{1}} +A_{2}(R-r)^{-\alpha_{2}} +A_{3}].$$
		\end{lemma}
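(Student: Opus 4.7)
The plan is to apply the classical geometric-iteration argument: introduce a strictly increasing sequence $\{\rho_k\}$ in $[r,R]$ with geometric-sized gaps, apply the hypothesis once per gap, and sum the resulting geometric series, using $\ell<1$ to kill the tail.

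First I would fix a parameter $\tau\in(0,1)$ (to be chosen below), set $\rho_0=r$, and define
\[
\rho_{k+1}=\rho_k+(1-\tau)\tau^{k}(R-r),\qquad k\geq 0,
\]
so that $\rho_{k+1}-\rho_k=(1-\tau)\tau^{k}(R-r)$ and $\rho_k\nearrow R$. Applying the assumed inequality with $\sigma=\rho_k$ and $\rho=\rho_{k+1}$ gives
\[
I(\rho_k)\leq \frac{A_1\,\tau^{-k\alpha_1}}{(1-\tau)^{\alpha_1}(R-r)^{\alpha_1}} +\frac{A_2\,\tau^{-k\alpha_2}}{(1-\tau)^{\alpha_2}(R-r)^{\alpha_2}} +A_3+\ell\,I(\rho_{k+1}).
\]

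Next I would iterate this recurrence $n$ times, starting from $k=0$, which yields
\[
I(r)\leq \ell^{n}I(\rho_n)+\sum_{k=0}^{n-1}\ell^{k}\Bigl[\frac{A_1\,\tau^{-k\alpha_1}}{(1-\tau)^{\alpha_1}(R-r)^{\alpha_1}} +\frac{A_2\,\tau^{-k\alpha_2}}{(1-\tau)^{\alpha_2}(R-r)^{\alpha_2}} +A_3\Bigr].
\]
Because $I$ is bounded on $[r,R]$ and $\ell<1$, the head term $\ell^{n}I(\rho_n)$ tends to $0$ as $n\to\infty$; this is exactly the role of the boundedness hypothesis on $I$. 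It remains only to let $n\to\infty$ on the right.

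The decisive choice is $\tau$: to make each series $\sum_{k\geq 0}(\ell\tau^{-\alpha_i})^{k}$ converge I pick any $\tau\in(\ell^{1/\alpha_1},1)$, equivalently $\ell\tau^{-\alpha_1}<1$. Since $\alpha_2\leq\alpha_1$ and $\tau<1$, one has $\tau^{-\alpha_2}\leq\tau^{-\alpha_1}$, so the $A_2$-series converges under the same choice, while the $A_3$-contribution is simply $A_3/(1-\ell)$. Summing gives
\[
I(r)\leq c(\alpha_1,\ell)\bigl[A_1(R-r)^{-\alpha_1}+A_2(R-r)^{-\alpha_2}+A_3\bigr],
\]
with $c(\alpha_1,\ell)$ absorbing the factors $(1-\tau)^{-\alpha_1}$ and $(1-\ell\tau^{-\alpha_1})^{-1}$. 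There is no real obstacle; the only subtlety is enforcing simultaneous summability of the two geometric series, which is automatic from $\alpha_1\geq\alpha_2$ and justifies stating the hypothesis in that order.
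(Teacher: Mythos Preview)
Your argument is correct and is exactly the standard geometric-iteration proof from Giaquinta's book. Note that the paper does not supply its own proof of this lemma at all---it merely quotes the statement with the citation \cite[Lemma V.3.1, p.161]{[Giaquinta]}---so there is nothing in the paper to compare against; your write-up simply reproduces the classical proof that the cited reference contains. One minor caveat: your choice $\tau\in(\ell^{1/\alpha_1},1)$ tacitly assumes $\alpha_1>0$; when $\alpha_1=\alpha_2=0$ the hypothesis reads $I(\sigma)\le A_1+A_2+A_3+\ell I(\rho)$ and the conclusion follows in one line from boundedness of $I$, so this case is trivial but worth mentioning.
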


The following lemma is a generalization of corresponding result in \cite{[CW]} (see \cite[Lemma  2.9,   p.558]{[CW]}).
  \begin{lemma}\label{CW}
Let $f\in L^{q}(Q(1))$ with $\f{3}{\tau-1}>q>1$ and \wgr{$0<r_{0}<1/2$}. Suppose that   for all $(x_{0},t_{0})\in Q(1/2)$ and $r_{0}\leq r\leq \f12$
\be\label{lem2.31}\|f-\overline{f}_{\tilde{B}(r)}\|_{L^{p,q}(\tilde{Q}(r))} \leq C r^{\tau}.\ee
Let $\nabla\Pi=\wgr{\mathcal{W}_{q,{B}(1)}} (\nabla\cdot f)$. Then for all $(x_{0},t_{0})\in Q(1/2)$ and
$r_{0}\leq r \leq\f12$, it holds
$$\|\Pi-\overline{\Pi}_{\tilde{B}(r)}\|_{L^{p,q}(\tilde{Q}(r))} \leq C r^{\tau}.$$
\end{lemma}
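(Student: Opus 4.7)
The plan is a Campanato-type iteration in the spirit of the argument in \cite{[CW]}, with the observation that the hypothesis $3/(\tau-1) > q$ is exactly what is needed to beat the harmonic-function decay exponent $1+3/q$ against the Campanato exponent $\tau$. Fix $(x_0,t_0)\in Q(1/2)$, set
$$\psi(\rho):=\|\Pi-\overline{\Pi}_{\tilde{B}(\rho)}\|_{L^{p,q}(\tilde{Q}(\rho))},\qquad r_0\le\rho\le 1/2,$$
and for each such scale $\rho$ decompose $f=f_1^\rho+f_2^\rho$ with $f_1^\rho:=(f-\overline{f}_{\tilde{B}(\rho)})\chi_{\tilde{B}(\rho)}$ and $f_2^\rho:=f-f_1^\rho$. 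By linearity of $\mathcal{W}_{q,B(1)}$ this induces $\nabla\Pi=\nabla\Pi_1^\rho+\nabla\Pi_2^\rho$ with $\nabla\Pi_i^\rho=\mathcal{W}_{q,B(1)}(\nabla\cdot f_i^\rho)$.

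For the local piece $\Pi_1^\rho$, apply the $L^q$-boundedness of $\mathcal{W}_{q,B(1)}$ at each fixed time slice to get $\|\Pi_1^\rho(\cdot,t)\|_{L^q(B(1))}\le C\|f(\cdot,t)-\overline{f}_{\tilde{B}(\rho)}(t)\|_{L^q(\tilde{B}(\rho))}$; integrating in $t$ and invoking the hypothesis yields $\|\Pi_1^\rho\|_{L^{p,q}(\tilde{Q}(\rho))}\le C\rho^\tau$, and hence $\|\Pi_1^\rho-\overline{\Pi_1^\rho}_{\tilde{B}(\theta\rho)}\|_{L^{p,q}(\tilde{Q}(\theta\rho))}\le C\rho^\tau$ for any $\theta\in(0,1/2]$. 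For the remote piece $\Pi_2^\rho$, note that on $\tilde{B}(\rho)$ the tensor $f_2^\rho\equiv \overline{f}_{\tilde{B}(\rho)}$ is spatially constant, so taking the divergence of the Stokes system defining $\Pi_2^\rho$ and using $\operatorname{div}u=0$ gives $\Delta\Pi_2^\rho=\partial_i\partial_j(f_2^\rho)_{ij}=0$ on $\tilde{B}(\rho)$. Consequently $\Pi_2^\rho$ is harmonic on $\tilde{B}(\rho)$ at each time, and by the interior harmonic estimate \eqref{h1} for the gradient combined with Poincaré's inequality,
$$\|\Pi_2^\rho-\overline{\Pi_2^\rho}_{\tilde{B}(\theta\rho)}\|_{L^q(\tilde{B}(\theta\rho))}\le C\theta^{1+3/q}\|\Pi_2^\rho-\overline{\Pi_2^\rho}_{\tilde{B}(\rho)}\|_{L^q(\tilde{B}(\rho))},$$
which after integration in $t$ over $(t_0-(\theta\rho)^2,t_0)\subset(t_0-\rho^2,t_0)$ and the identity $\Pi_2^\rho-\overline{\Pi_2^\rho}_{\tilde{B}(\rho)}=(\Pi-\overline{\Pi}_{\tilde{B}(\rho)})-(\Pi_1^\rho-\overline{\Pi_1^\rho}_{\tilde{B}(\rho)})$ delivers the bound $C\theta^{1+3/q}[\psi(\rho)+C\rho^\tau]$.

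Adding the two contributions gives the iteration
$$\psi(\theta\rho)\le C\theta^{1+3/q}\psi(\rho)+C\rho^\tau.$$
Rewriting the hypothesis as $\tau<1+3/q$, we may fix $\theta\in(0,1/2)$ small enough that $C\theta^{1+3/q}\le\tfrac12\theta^\tau$; a routine induction (cf.\ Lemma~\ref{iter1}) then yields $\psi(\theta^k\rho_0)\le M(\theta^k\rho_0)^\tau$ for all $k\ge 0$, starting from $\rho_0=1/2$ with $\psi(1/2)<\infty$ provided by the global $L^q$-bound $\|\Pi\|_{L^q(B(1))}\le C\|f\|_{L^q(B(1))}$. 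Comparing an arbitrary $r\in[r_0,1/2]$ with the dyadic scale $\theta^k\rho_0$ immediately above it completes the proof.

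The main technical point, and the one which deserves care in the write-up, is the verification that $\Pi_2^\rho$ is harmonic on $\tilde{B}(\rho)$ even though $\mathcal{W}_{q,B(1)}$ is defined through a Dirichlet problem on the larger ball $B(1)$; the argument is that the Stokes equation for $\Pi_2^\rho$ implies $\Delta\Pi_2^\rho=\nabla\cdot\nabla\cdot f_2^\rho$ in the sense of distributions on all of $B(1)$, and this right-hand side vanishes on $\tilde{B}(\rho)$ solely because $f_2^\rho$ is spatially constant there. Once this is in place, the rest of the argument is a standard Campanato iteration driven by the sharp exponent relation $\tau<1+3/q$.
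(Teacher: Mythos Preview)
Your proposal is correct and follows essentially the same route as the paper: decompose $\Pi$ at each scale into a locally-generated piece (controlled by the Stokes/$W^{-1,q}\!\to\! L^q$ bound of $\mathcal{W}_{q,B(1)}$, which is the paper's ``Calder\'on--Zygmund'' step) and a harmonic remainder (controlled by the interior estimate \eqref{h2}), then run the Campanato iteration $\psi(\theta\rho)\le C\theta^{1+3/q}\psi(\rho)+C\rho^\tau$ using $\tau<1+3/q$. The only cosmetic difference is that you use the sharp cutoff $\chi_{\tilde{B}(\rho)}$ while the paper uses a smooth cutoff $\phi$ supported in $\tilde{B}(r)$ and equal to $1$ on $\tilde{B}(3r/4)$; both produce a harmonic remainder on the inner region and lead to the same iteration.
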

\begin{proof}
In view of  the definition of  pressure projection $\mathcal{W}_{q,B(1)}$, we know that
\be\label{2.8}
\|\Pi\|_{L^{q}(B(1))}\leq C\|f-\overline{f}_{B(1)}\|_{L^{q}(B(1))}.
\ee
We introduce a cut-off function $\phi(x)$ such that $\phi(x)=1, x\in \tilde{B}(\f{3r}{4}), \phi(x)=0, x\in \tilde{B}^{c}(r).$\\
Note that $$\Delta \Pi=\text{div}\wgr{\mathcal{W}_{q,B(1)}}(\nabla\cdot f).$$
 We split $\Pi$ into two part $\Pi=\Pi_{(1)}+\Pi_{(2)}$, where
  $$\Delta \Pi_{(1)}=-\text{div}\wgr{\mathcal{W}_{q,B(1)}}(\nabla\cdot [\phi( f-\overline{f}_{\tilde{B}(r)})]),$$
  which follows from that
  $$\Delta \Pi_{(2)}=0, x\in \tilde{B}(3r/4).$$
Thanks  classical Calder\'on-Zygmund theorem, we have
$$
\|\Pi_{(1)}-\overline{\Pi_{(1)}}_{\tilde{B}(r)}\|_{L^{q}(\tilde{B}(r))}\leq C\|f-\overline{f}_{\tilde{B}(r)}\|_{L^{q}(\tilde{B}(r))}.
$$
This and  hypothesis \eqref{lem2.31} yield
\begin{equation}\label{cz1}
\|\Pi_{(1)}-\overline{\Pi_{(1)}}_{\tilde{B}(r)}\|_{L^{p,q}(\tilde{Q}(r))}\leq \wgr{Cr^{\tau}}.
\end{equation}
\wgr{The interior} estimates of harmonic functions \eqref{h2} and \wgr{the triangle} inequality guarantee that, for $ \theta<1/2$, we have
$$\ba
&\int_{\tilde{B}(\theta r)}|\Pi_{(2)}-\overline{\Pi_{(2)}}_{\tilde{B}(\theta r)}|^{q}dx\\
\leq& \f{C( r\theta )^{3+q}}{(\f{r}{2})^{3+q}}
\int_{\tilde{B}(  r/2)}|\Pi_{(2)}-\overline{\Pi_{(2)}}_{\tilde{B}(r/2)}|^{q}dx\\
\leq& C\theta^{3+q}
\int_{\tilde{B}(  r/2)}|\Pi-\overline{\Pi}_{ \tilde{B}( r/2)}|^{q}dx+
\int_{\tilde{B}(  r/2)}|\Pi_{(1)}-\overline{\Pi_{(1)}}_{\tilde{B}(r/2)}|^{q}dx.
\ea$$
We derive from the latter inequality   and  \eqref{cz1}   that
$$\ba
\|\Pi_{(2)}-\overline{\Pi_{(2)}}_{\tilde{B}(\theta r)}\|_{L^{p,q}(Q(\theta r))}
 \leq C\theta^{\f3 q+1}
 \|\Pi-\overline{\Pi}_{ \tilde{B}( r/2)}\|_{L^{p,q}(Q( r/2))} +C r^{\tau}.
\ea$$
 With the help of the triangle inequality again, \eqref{cz1} and the last inequality, we infer that
$$\ba&\|\Pi-\overline{\Pi}_{\tilde{B}(\theta r)}\|_{L^{p,q}(Q(\theta r))}\\\leq&
\|\Pi_{(1)}-\overline{\Pi_{(1)}}_{\tilde{B}(\theta r)}\|_{L^{p,q}(Q(\theta r))} +\|\Pi_{(2)}-\overline{\Pi_{(2)}}_{\tilde{B}(\theta r)}\|_{L^{p,q}(Q(\theta r))}
\\
\leq& \wgr{C r^{\tau}+C\theta^{\f3 q+1}
 \|\Pi-\overline{\Pi}_{ \tilde{B}( r/2)}\|_{L^{p,q}(Q( r/2))} +C r^{\tau}}\\
\leq& C\theta^{\f3 q+1}
 \|\Pi-\overline{\Pi}_{ \tilde{B}( r)}\|_{L^{p,q}(Q( r))} +C r^{\tau},
\ea$$
where we used the fact that $\|g-\overline{g}_{B(r)}\|_{\wgr{L^l}(B(r))}\leq C\|g-c\|_{\wgr{L^l}(B(r))}$ with \wgr{$l\geq1$}.

Now, thanks to $\f3 q+1>\tau$, invoking iteration \wgr{Lemma 2.2} and \eqref{2.8}, we see that
 $$\ba\|\Pi-\overline{\Pi}_{\tilde{B}(  r)}\|_{L^{p,q}(Q( r))}
&\leq  Cr^{\tau}\|\Pi-\overline{\Pi}_{\tilde{B}(\wgr{1/2})}\|_{L^{p,q}(Q(\wgr{1/2}))}+Cr^{\tau}\\
&\leq  Cr^{\tau}\|f-\overline{f}_{B(1)}\|_{L^{p,q}(Q(  1))}
+Cr^{\tau}
\\&\leq Cr^{\tau}.
\ea$$
This completes the proof of this lemma.
\end{proof}

To prove Theorems \ref{the1.5}, we need the following result.

\begin{prop}\wgr{\cite{[RWW18]}}\label{add}
 Let $\mathcal{S}  $ stand   for  the set of all the potential interior singular  points  of suitable weak solutions  to \eqref{NS} and $\tau $ \wgr{be defined in Lemma 6.2}. Then, there holds, for $0\leq\sigma<\f{1}{\tau+1}$,
$$\Lambda(\mathcal{S},r(\log(e/r))^\sigma)=0.$$
\end{prop}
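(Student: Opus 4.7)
The plan is to invoke the abstract covering--summation framework from \cite{[RWW18]}. The key input is Lemma 6.2, which I would unpack into the following quantitative statement: there exist an absolute constant $\varepsilon_{0}>0$ and a non-negative, globally integrable quantity $F=F(u,\Pi)$ on $Q(1)$ such that at every singular point $(x_{0},t_{0})\in\mathcal{S}$ and every sufficiently small $r>0$,
$$
\iint_{Q_{r}(x_{0},t_{0})} F\,dx\,dt\,\geq\,\varepsilon_{0}\,r^{a(\tau)},
$$
where the scaling exponent $a(\tau)$ is determined by the functional in Lemma 6.2. This is the quantitative mechanism by which the $\varepsilon$-regularity criterion forces geometric restrictions on $\mathcal{S}$.

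With this lower bound in hand, I would fix $\delta>0$, assign to each $(x_{0},t_{0})\in\mathcal{S}$ a radius $r(x_{0},t_{0})\in(0,\delta)$ realizing the bound, and apply a parabolic Vitali-type covering lemma to extract a countable disjoint subfamily $\{Q_{r_{i}}(x_{i},t_{i})\}_{i}$ whose fivefold enlargements still cover $\mathcal{S}$. Grouping indices dyadically by size, $\mathcal{I}_{k}=\{i:2^{-(k+1)}<r_{i}\leq 2^{-k}\}$, disjointness combined with the lower bound yields
$$
\#\mathcal{I}_{k}\,\leq\,C\,\varepsilon_{0}^{-1}\,2^{a(\tau)k}\,M(\delta),\qquad M(\delta):=\iint_{N_{\delta}(\mathcal{S})} F\,dx\,dt,
$$
where $N_{\delta}(\mathcal{S})$ is the parabolic $\delta$-neighborhood of $\mathcal{S}$; by absolute continuity of the Lebesgue integral together with $|N_{\delta}(\mathcal{S})|\to 0$, we have $M(\delta)\to 0$ as $\delta\to 0$.

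The delicate step is the summation against the logarithmic gauge. The naive dyadic bound $\sum_{i}(5r_{i})\bigl(\log(e/5r_{i})\bigr)^{\sigma}\leq C\,M(\delta)\sum_{k}2^{k(a(\tau)-1)}k^{\sigma}$ is too crude to recover the sharp threshold. The condition $\sigma<1/(\tau+1)$ emerges instead from a refined scale-by-scale Vitali extraction at the dyadic levels $\rho_{k}=2^{-k}$ combined with a H\"{o}lder trade-off on the dyadic sums using conjugate exponents $\tau+1$ and $(\tau+1)/\tau$; this decouples the $r^{a(\tau)}$ factor from the log weight $(\log(e/r))^{\sigma}$ and produces a bound of the form $C\,M(\delta)^{\theta}$ for some $\theta>0$. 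Sending $\delta\to 0$ then forces $\Lambda(\mathcal{S},r(\log(e/r))^{\sigma})=0$.

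The main obstacle will be executing the sharp H\"{o}lder rebalancing: the dyadic series has its break-even precisely at $\sigma(\tau+1)=1$, so extracting a factor of $M(\delta)$ requires carefully partitioning the singular set across scales and applying H\"{o}lder in a way that respects both the disjointness of the cylinders and the scale-invariant lower bound. The remaining items, namely the validity of the parabolic Vitali covering, the measurability of $F$, and the vanishing of $M(\delta)$, are routine and are handled exactly as in \cite{[RWW18]}.
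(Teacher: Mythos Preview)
The paper does not prove this proposition; it is quoted verbatim from \cite{[RWW18]} and used as a black box in Section~6. So there is no in-paper proof to compare against. That said, your sketch has a genuine structural gap relative to the argument it is meant to reproduce.

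Lemma~6.2 is stated only for points $(x,t)\in F(m)\cap\mathcal{S}$, where $F(m)=\{\limsup_{r\to 0}E_{*}(r)/m(r)\le 1\}$ and $m(r)=(\log(e/r))^{\sigma}$; it does \emph{not} furnish a lower bound at every singular point. The Choe--Lewis/\cite{[RWW18]} scheme first splits $\mathcal{S}=(\mathcal{S}\setminus F(m))\cup(\mathcal{S}\cap F(m))$. On $\mathcal{S}\setminus F(m)$ one has, by definition, a sequence $r_{n}\to 0$ with $\int_{Q(r_{n})}|\nabla u|^{2}>r_{n}\,m(r_{n})=r_{n}(\log(e/r_{n}))^{\sigma}$, and a direct Vitali argument with the integrable density $|\nabla u|^{2}$ already gives $\Lambda(\mathcal{S}\setminus F(m),\,r(\log(e/r))^{\sigma})=0$. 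Lemma~6.2 is then invoked only on $\mathcal{S}\cap F(m)$, where one simultaneously has the \emph{upper} bound $E_{*}(r)\le C\,m(r)$ from membership in $F(m)$ together with the bound from Lemma~6.1; it is the interplay of these bounds with the lower bound of Lemma~6.2 that produces the threshold $\sigma(\tau+1)<1$. Your write-up elides this decomposition and applies the Lemma~6.2 bound uniformly over $\mathcal{S}$, which is not justified.

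A second issue: the lower bound actually delivered by Lemma~6.2 is $J_{q}(r)\,m(r)^{\tau}\ge c_{2}$, i.e.\ $\int_{Q(r)}|\nabla u|^{q}\ge c_{2}\,r^{5-2q}\,m(r)^{-\tau}$, which carries the logarithmic weight $m(r)^{-\tau}$. It is not of the pure-power form $\varepsilon_{0}\,r^{a(\tau)}$ that you postulate. That logarithm is precisely what makes the balance $\sigma(\tau+1)<1$ appear, so your subsequent ``H\"older rebalancing with exponents $\tau+1$ and $(\tau+1)/\tau$'' is acting on the wrong object and does not reproduce the mechanism in \cite{[RWW18]}.
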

\section{Caccioppoli estimate}
		\label{sec3}
		\setcounter{section}{3}\setcounter{equation}{0}
This section contains the proof of Proposition  \ref{prop1.2}.
Proposition  \ref{prop1.2}  turns out to be a corollary of the following proposition.

\begin{proof}[\wgr{Proof of Proposition  \ref{prop1.2}}]Consider $0<R/2\leq r<\f{3r+\rho}{4}<\f{r+\rho}{2}<\rho\leq R$. Let $\phi(x,t)$ be non-negative smooth function supported in $Q(\f{r+\rho}{2})$ such that
$\phi(x,t)\equiv1$ on $Q(\f{3r+\rho}{4})$,
$|\nabla \phi| \leq  C/(\rho-r) $ and $
|\nabla^{2}\phi|+|\partial_{t}\phi|\leq  C/(\rho-r)^{2} .$

Let $\nabla\Pi_{h}=\mathcal{W}_{q,B(\rho)}(u)$, then, there holds
\begin{align}
&\|\nabla \Pi_{h}\|_{L^{p,q}(Q(\rho))}\leq C\|u\|_{L^{p,q}(Q(\rho))},\label{wp1}\\
 &\|  \Pi_{1}\|_{L^{2}(Q(\rho))}\leq C\|\nabla u\|_{L^{2}(Q(\rho))},\label{wp2}\\
 &\|  \Pi_{2}\|_{L^{\f{p}{2},\f{q}{2}}(Q(\rho))}\leq C\|  |u|^{2}\|_{L^{\f{p}{2},\f{q}{2}}(Q(\rho))}.\label{wp3}
 \end{align}
By virtue of interior estimate of harmonic function \eqref{h1} and \eqref{wp1}, we conclude that
\begin{equation}\label{2.3.8}
\begin{split}
&\|\nabla \Pi_{h} \|_{L^{2,\infty}(Q(\f{r+\rho}{2}))}\\
\leq & \f{C}{(\rho-r)^{\f32}}\| \nabla\Pi_{h} \|_{L^{2,2}(Q( \rho ))}\leq \f{C\rho^{\f{5\alpha-4}{2\alpha}}}{(\rho-r)^{\f32}}\| \nabla\Pi_{h} \|_{L^{p,q}(Q( \rho ))}
\wgr{\leq \f{C\rho^{\f{5\alpha-4}{2\alpha}}}{(\rho-r)^{\f32}}\|u\|_{L^{p,q}(Q( \rho ))},}
\end{split}
\end{equation}
\begin{equation}\label{2.3.9}
\begin{split}
&\|\nabla^{2}\Pi_{h} \|_{L^{2,\infty}(Q(\f{r+\rho}{2}))}\\
\leq &\f{C}{(\rho-r)^{\f32+1}}\| \nabla\Pi_{h} \|_{L^{2,2}(Q( \rho ))}\leq \f{C\rho^{\f{5\alpha-4}{2\alpha}}}{(\rho-r)^{\f52}}\| \nabla\Pi_{h} \|_{L^{p,q}(Q( \rho ))}
\wgr{\leq \f{C\rho^{\f{5\alpha-4}{2\alpha}}}{(\rho-r)^{\f52}}\|u\|_{L^{p,q}(Q( \rho ))},}
\end{split}
\end{equation}
\begin{equation}
\begin{split}
&\|\nabla \Pi_{h} \|_{L^{2,4}(Q(\f{r+\rho}{2}))}\\
\leq & \f{\wgr{C}\rho^{\f34}}{(\rho-r)^{\f32}}\|\nabla \Pi_{h}\|_{L^{2,2}\wgr{(Q( \rho ))}}
\leq \f{C\rho^{\f{13\alpha-8}{4\alpha}}}{(\rho-r)^{\f32}}\| \nabla\Pi_{h} \|_{L^{p,q}(Q( \rho ))}
\wgr{\leq \f{C\rho^{\f{13\alpha-8}{4\alpha}}}{(\rho-r)^{\f32}}\|u\|_{L^{p,q}(Q( \rho ))}.}
\end{split}
\end{equation}

We define $\beta=\f{1}{\alpha-1}$ \wgr{and} choose $\phi^{2\beta}$ as the
non-negative function in the local energy inequality  to get
 \begin{align}
  &\int_{B(\wgr{\f{r+\rho}{2}})}|v|^2\phi^{2\beta} (x,t)  d  x+ \iint_{Q(\wgr{\f{r+\rho}{2}})}\big|\nabla v\big|^2\phi^{2\beta} (x,s) d  x ds  \nonumber\\  \leq&   \f{C}{(\rho-r)^{2}}\iint_{Q(\f{r+\rho}{2})} |v|^{2}  + \f{C}{(\rho-r)}\iint_{Q(\f{r+\rho}{2})}|v|^{2}|u | \phi^{2\beta-1}    \nonumber\\
& +\iint_{Q(\f{r+\rho}{2})}\phi^{2\beta}| u\otimes v :\nabla^{2}\Pi_{h} |     +\f{C}{(\rho-r)}\iint_{Q(\f{r+\rho}{2})} |\Pi_{1}v| \phi^{2\beta-1}   \nonumber\\& +\f{C}{(\rho-r)}\iint_{Q(\f{r+\rho}{2})}  |\Pi_{2}\phi^{\beta-1}||v\phi^{\beta}| \nonumber\\
\wgr{=:}&I+II+III+IV+V   .\label{wloc13}
 \end{align}

By means of the triangle inequality, \eqref{wp1} and the H\"older inequality, we see that
\be
\iint_{Q(\f{r+\rho}{2})} |v|^{2}\leq\iint_{Q(  \rho )} |u|^{2}+|\nabla\Pi_{h}|^{2}\leq C\rho^{\f{5\alpha-4}{ \alpha}} \| u \|^{2}_{L^{p,q}(Q( \rho ))},\label{3.2}\ee
which implies that
$$
I\leq \f{C\rho^{\f{5\alpha-4}{ \alpha}} }{(\rho-r)^{2}}\| u \|^{2}_{L^{p,q}(Q( \rho ))}.
$$
\wgr{It is obvious that}
 \begin{align}
II &\leq\f{C}{(\rho-r)}\iint_{Q(\rho)}|v|^{2}\phi^{2\beta-1}\wgr{|v-\nabla\Pi_{h}|}  \nonumber\\
 &\leq \f{C}{(\rho-r)}\iint_{Q(\rho)}|v|^{3}\phi^{2\beta-1} +\phi^{2\beta-1} |v|^{2}|\nabla\Pi_{h}|     \nonumber\\
 &  \wgr{= \f{C}{(\rho-r)}\iint_{Q(\rho)}|v|^{3}\phi^{\beta(3-\alpha)}+\f{C}{(\rho-r)}\iint_{Q(\rho)}\phi^{2\beta-1} |v|^{2}|\nabla\Pi_{h}|}\nonumber\\
 &\wgr{=:}II_{1}+II_{2},\label{13.10}
 \end{align}
 where the fact $2\beta-1-\beta(3-\alpha)=0$ is used.

  Utilizing \wgr{similar argument as in proof of} \eqref{zw} in Lemma \ref{zc} and the Young inequality, we write
 $$\ba
 II_{1} &\leq \f{\wgr{C}}{(\rho-r)}\rho^{3(\alpha-1)/2} \|v\|_{L^{p,q}(Q(\rho))}^{\alpha}\Big(\|v\phi^{\beta}\|_{\wgr{L^{\infty,2}}(Q(\rho))}^{2}+\|\wgr{\nabla(v\phi^{\beta})}\|_{L^{2}(Q(\rho))}^{2}\Big)^{(3-\alpha)/2}\\
&\leq  \f{C}{(\rho-r)^{\f{2}{\alpha-1}}}\rho^{3 } \|u\|_{L^{p,q}(Q(\rho))}^{\f{2\alpha}{\alpha-1}}+\f{1}{32}\Big(\|v\phi^{\beta}\|_{\wgr{L^{\infty,2}}(Q(\rho))}^{2}+\|\wgr{\nabla(v\phi^{\beta})}\|_{L^{2}(Q(\rho))}^{2}\Big).
\ea$$
Invoking the  H\"older inequality,
\eqref{2.3.8}, \wgr{\eqref{3.2}} and the Young inequality, we obtain
\begin{align}
II_{2}
 &\leq  \f{C}{(\rho-r)}\|v\phi^{ \beta }\|_{L^{\infty,2}}\|v \|_{L^{2,2}}\|\nabla\Pi_{h} \|_{L^{2,\infty}}\nonumber\\
 &\leq \f{1}{32}\|v\phi^{ \beta }\|^{2}_{L^{\infty,2}}+ \f{C}{(\rho-r)^{2}}\|v \|^{2}_{L^{2,2}}\f{C}{(\rho-r)^{3}}\| \nabla\Pi_{h} \|^{2}_{L^{2,2}(Q( \rho ))}\nonumber\\
 &\leq \f{1}{32}\|v\phi^{ \beta }\|^{2}_{L^{\infty,2}}+
 \f{C\rho^{\f{10\alpha-8}{ \alpha}} }{(\rho-r)^{5}}\| u \|^{4}_{L^{p,q}(Q( \rho ))}.\label{13.11}
\end{align}
%\begin{align}
%\iint_{Q(\rho)}|v|^{3}\phi^{\beta(3-\alpha)}&\leq
%\f{1}{16}\Big(\|v\phi^{\beta}\|_{L^{2,\infty}(Q(\rho))}^{2}+\|\nabla v\phi^{\beta}\|_{L^{2}(Q(\rho))}^{2}\Big)\nonumber\\&+ \f{C}{(\rho-r)}\|v \|^{2}_{L^{2,2}}\|\nabla\Pi_{h} \|^{2}_{L^{2,\infty}}+ \f{C}{(\rho-r)^{\f{2}{\alpha-1}}} \rho^{3 } \|v\|_{L^{\f{5+2\delta}{2}}(Q(\rho))}^{\f{5+2\delta}{\delta}}\end{align}
Arguing in the same manner as in  \wgr{\eqref{13.10}}, we infer that
 \begin{align}
III&=
\iint_{Q(\rho)} \phi^{2\beta}(\wgr{(v-\nabla\Pi_{h})}\otimes v :\nabla^{2}\Pi_{h} )  \nonumber\\
\leq&
\iint_{Q(\rho)} \phi^{2\beta}|v|^{2}|\nabla^{2}\Pi_{h}|+\iint_{Q(\rho)}\phi^{2\beta}|v||\nabla\Pi_{h}| |\nabla^{2}\Pi_{h}|
\nonumber\\
\wgr{=:}&III_{1}+III_{2}\nonumber \end{align}
From the H\"older inequality, Young's inequality and
\eqref{2.3.9}, \wgr{\eqref{3.2}}, we see that
\begin{align}
III_{1}\leq & C\|v\phi^{ \beta }\|_{L^{\infty,2}}\|v \|_{L^{2,2}}\|\nabla^{2}\Pi_{h} \|_{L^{2,\infty}}  \nonumber
\\
\leq & \f{1}{32}\|v\phi^{ \beta }\|^{2}_{L^{\infty,2}(Q(\f{r+\rho}{2}))}+ C\|v \|^{2}_{L^{2,2(Q(\f{r+\rho}{2}))}}\|\nabla^{2}\Pi_{h} \|^{2}_{L^{2,\infty}(Q(\f{r+\rho}{2}))} \nonumber
\\
\leq & \f{1}{32}\|v\phi^{ \beta }\|^{2}_{L^{\infty,2}}+ C\wgr{\|v\|^{2}_{L^{2,2}}}\f{C}{(\rho-r)^{5}}\| \nabla\Pi_{h} \|^{2}_{L^{2,2}(Q( \rho ))} \nonumber
\\
 \leq & \f{1}{32}\|v\phi^{ \beta }\|^{2}_{L^{\infty,2}}+ \f{C\rho^{\f{10\alpha-8}{ \alpha}} }{(\rho-r)^{5}}\| u \|^{4}_{L^{p,q}(Q( \rho ))}\wgr{.}
\end{align}
Similarly, we have
\begin{align}
III_{2}\leq &  C\|v\phi^{ \beta }\|_{L^{\infty,2}}\|\nabla\Pi_{h} \|_{L^{2,2}}\|\nabla^{2}\Pi_{h} \|_{L^{2,\infty}} \nonumber
\\
\leq & \f{1}{32}\|v\phi^{ \beta }\|^{2}_{L^{\infty,2}(Q(\f{r+\rho}{2}))} +C\|\nabla\Pi_{h} \|^{2}_{L^{2,2}(Q(\f{r+\rho}{2}))}\|\nabla^{2}\Pi_{h} \|^{2}_{L^{2,\infty}(Q(\f{r+\rho}{2}))}\nonumber
\\
\leq & \f{1}{32}\|v\phi^{ \beta }\|^{2}_{L^{\infty,2}} +C\|\nabla\Pi_{h} \|^{2}_{L^{2,2}}\f{C}{(\rho-r)^{5}}\| \nabla\Pi_{h} \|^{2}_{L^{2,2}(Q( \rho ))}\nonumber
\\
\leq& \f{1}{32}\|v\phi^{ \beta }\|^{2}_{L^{\infty,2}}+ \f{C\rho^{\f{10\alpha-8}{ \alpha}} }{(\rho-r)^{5}}\| u \|^{4}_{L^{p,q}(Q( \rho ))}.
\end{align}
As a consequence, we have
\begin{align}
III
 \leq \wgr{\f{2}{32}}\|v\phi^{ \beta }\|^{2}_{L^{\infty,2}}+ \f{C\rho^{\f{10\alpha-8}{ \alpha}} }{(\rho-r)^{5}}\| u \|^{4}_{L^{p,q}(Q( \rho ))}\wgr{.}
\end{align}
In light  of H\"older's inequality, \eqref{wp2}, \wgr{\eqref{3.2}} and Young's inequality, we deduce that
\begin{align}
IV&\leq \f{C}{(\rho-r)}\| v\|_{L^{2}(Q(\f{r+\rho}{2}))}
\| \Pi_{1} \|_{L^{2}(Q(\f{r+\rho}{2}))} \nonumber\\
&\leq \f{C}{(\rho-r)^{2}}\| v\|^{2}_{L^{2}(Q(\f{r+\rho}{2}))}
+\f{1}{16}\| \Pi_{1} \|^{2}_{L^{2}(Q(\rho))} \nonumber\\
&\leq\f{C}{(\rho-r)^{2}}\wgr{\|v\|^{2}_{L^{2}(Q(\f{r+\rho}{2}))}}+\f{1}{16}\| \nabla u\|^{2}_{L^{2}(Q(\rho))}\nonumber\\
&\leq\f{C\rho^{\f{5\alpha-4}{ \alpha}}}{(\rho-r)^{2}} \| u \|^{2}_{L^{p,q}(Q( \rho ))}+\f{1}{16}\| \nabla u\|^{2}_{L^{2}(Q(\rho))}.
\end{align}

To proceed further, we denote $\eta=\phi^{\beta-1}$.
The fact  $\partial_{i}\partial_{i} \wgr{\Pi_2}=-\partial_{i}\partial_{j} (u_{i}u_{j})$ and Leibniz's formula allow us to get
			\[\partial_{i}\partial_{i}(\wgr{\Pi_2}\eta)=-\eta \partial_{i}\partial_{j}(u_{j}u_{i})+
			2\partial_{i}\eta\partial_{i}\Pi+\Pi\partial_{i}\partial_{i}\eta.\]
Integrating by parts, we have
			\be\ba\label{p}
			\eta\Pi_{2}(x)=&\Gamma \ast (-\eta \partial_{i}\partial_{j}(u_{j}u_{i})+			2\partial_{i}\eta\partial_{i}\Pi_{2}+\Pi_{2}\partial_{i}\partial_{i}\eta)\\
			=&-\partial_{i}\partial_{j}\Gamma \ast (\eta (u_{j}u_{i}))\\
			&+2\partial_{i}\Gamma \ast(\partial_{j}\eta(u_{j}u_{i}))\wgr{+2}\partial_{i}\Gamma \ast(\partial_{i}\eta \Pi_{2}) \\
			&-\Gamma \ast
			(\partial_{i}\partial_{j}\eta u_{j}u_{i})-\Gamma \ast(\partial_{i}\partial_{i}\eta \Pi_{2})\\
			\wgr{=:} & \Pi_{21}(x)+\Pi_{22}(x)+\Pi_{23}(x),
			\ea\ee
By   Young's convolution inequality, setting $\tau=\f{5q-12}{2q}>0$, we arrive at
\begin{align}
\|\Pi_{22}(x)\|_{L^{1,2}}&\leq \f{C\rho^{\tau}}{\rho-r} \B(\|\Pi_{2}\|_{L^{1,\f{6}{5-2\tau}}}+ \||u|^{2}\|_{L^{1,\f{6}{5-2\tau}}}\B)\nonumber\\&\leq \f{C\rho^{\tau}}{\rho-r}\|u \|^{2}_{L^{2,q}}\nonumber\\
&\leq\f{C\rho^{\tau+\f{2(p-2)}{p}}}{\rho-r}\|u \|^{2}_{L^{p,q}}\nonumber\\
&\leq\f{C\rho^{ \f{9\alpha-8}{2\alpha}}}{\rho-r}\|u \|^{2}_{L^{p,q}},\label{13.17}\end{align}
Likewise,
\begin{align}
 \|\Pi_{23}(x)\|_{L^{1,2}}&\leq \f{C\rho}{(\rho-r)^{2}} \B(\|\Pi_{2}\|_{L^{1,\f{6}{5}}}+ \||u|^{2}\|_{L^{1,\f{6}{5}}}\B)\nonumber\\ &\leq \f{C\rho}{(\rho-r)^{2}}\|u \|^{2}_{L^{2,\f{12}{5}}}\nonumber\\&\leq \f{C\rho^{\f{11\alpha-8}{2\alpha}}}{(\rho-r)^{2}}\|u \|^{2}_{L^{p,q}}.\label{13.18}
\end{align}
Note that $$u_{i}u_{j}= v_{i}v_{j}-(\nabla\Pi_{h})_{i}(v)_{j}+(\nabla\Pi_{h})_{i}(\nabla\Pi_{h})_{j}-(v)_{i}(\nabla\Pi_{h})_{j}\wgr{.}$$
Hence, some integrations by parts ensure that
$$\ba
\wgr{\Pi_{21}}&=-\partial_{i}\partial_{j}\Gamma \ast \B[\eta (v_{i}v_{j}-(\nabla\Pi_{h})_{i}(v)_{j}+(\nabla\Pi_{h})_{i}(\nabla\Pi_{h})_{j}-(v)_{i}(\nabla\Pi_{h})_{j}\B]
\\&=-\partial_{i}\partial_{j}\Gamma \ast \B[\eta (v_{i}v_{j})\B]+2\partial_{i}\partial_{j}\Gamma\wgr{\ast} \B[\eta (\nabla\Pi_{h})_{i}(v)_{j}\B] -\partial_{i}\partial_{j}\Gamma \ast\B[\eta (\nabla\Pi_{h})_{i}(\nabla\Pi_{h})_{j} \B]\\
&\wgr{=:}\Pi_{211}+\Pi_{212}+\Pi_{213}\wgr{.}
\ea$$
	It is clear that		
$$\ba
\wgr{V} &\leq
\f{C}{\rho-r}\iint_{Q(\rho)} |v \phi^{\beta}|  (|\Pi_{211}|+|\Pi_{212}|+|\Pi_{213}|+|\Pi_{22}|+|\Pi_{23}|)\\
&\wgr{=: V_{1}+V_{2}+ V_{3}+V_{4}+V_{5}}.\ea$$
To bound $IV_{1}$,
  the classical Calder\'on-Zygmund theorem allows us to argue as the deduction of  \wgr{\eqref{zw}} to obtain that
\begin{align}
\wgr{V_{1}}&\leq
  \f{C}{(\rho-r)} \|v \phi^{\beta} \|_{L^{m,n}(Q(\rho))}
\|\Pi_{211}\|_{L^{m^{\ast},
n^{\ast}}}
 \nonumber \\
&\leq \f{C}{(\rho-r)} \|v \phi^{\beta} \|_{L^{m,
n}(Q(\rho))}
\|v^{\alpha} (v \phi^{\beta})^{2-\alpha} \phi^{\beta-1-\beta(2-\alpha)} \|_{L^{m^{\ast},
n^{\ast}}}
  \nonumber\\&\leq \f{C}{(\rho-r)} \|v \phi^{\beta} \|^{3-\alpha}_
  {L^{(3-\alpha)(\f{p}{\alpha})^{\ast},(3-\alpha)(\f{q}{\alpha})^{\ast}}(Q(\rho))}
\|v  \|^{\alpha}_{L^{p,q}}\nonumber\\
&\leq \f{\wgr{C}}{(\rho-r)^{\f{2}{\alpha-1}}}\rho^{3 } \|\wgr{u}\|_{L^{p,q}(Q(\rho))}
^{\f{2\alpha}{\alpha-1}}+\f{1}{32}\Big(\|v\phi^{\beta}\|
_{\wgr{L^{\infty,2}}(Q(\rho))}^{2}+\|\wgr{\nabla(v\phi^{\beta})}\|_{L^{2}(Q(\rho))}^{2}\Big)\wgr{,}\label{locp5}
\end{align}
where we used the Young inequality and the fact that $\beta-1-\beta(2-\alpha)=0$.

 Likewise, we get
\begin{align}
\wgr{V_{2}}&\leq\f{C}{\rho-r} \|v\phi^{\beta}\|_{L^{\infty,2}}\|\Pi_{212}\|_{L^{1,2}}\nonumber\\
&\leq \f{C}{\rho-r}\|v\phi^{\beta}\|_{L^{\infty,2}}\| \eta (\nabla\Pi_{h})_{i}(v)_{j}\|_{L^{1,2}}\nonumber\\
&\leq \wgr{\f{C}{\rho-r}}\|v\phi^{\beta}\|_{L^{\infty,2}}\|\nabla\Pi_{h}\|_{L^{2,\infty}}\| \eta (v)_{j}\|_{L^{2,2}}\nonumber\\
&\leq \f{1}{128}\| \phi^{\beta}  v \|^{2}_{L^{\infty,2}}+\f{C}{(\rho-r)^{2}}\|v\|^{2}_{L^{2}}\|\nabla\Pi_{h}\|^{2}_{L^{2,\infty}}
\nonumber\\
&\leq \f{1}{128}\| \phi^{\beta}  v \|^{2}_{L^{\infty,2}}+ \f{C\rho^{\f{10\alpha-8}{ \alpha}} }{(\rho-r)^{5}}\| u \|^{4}_{L^{p,q}(Q( \rho ))}.
\end{align}
\wgr{Similarly}, we get
\begin{align}
\wgr{V_{3}}&\leq \f{\wgr{C}}{\rho-r} \|v\phi^{\beta}\|_{L^{\infty,2}}\|\Pi_{213}\|_{L^{1,2}}\nonumber\\
&\leq \f{\wgr{C}}{\rho-r} \|v\phi^{\beta}\|_{L^{\infty,2}}\|  \nabla\Pi_{h} \|^{2}_{L^{2,4}}\nonumber\\
&\leq\f{1}{128}\| \phi^{\beta}  v \|^{2}_{L^{\infty,2}}+\f{\wgr{C}}{(\rho-r)^{2}} \|  \nabla\Pi_{h} \|^{4}_{L^{2,4}}\nonumber\\
&\leq\f{1}{128}\| \phi^{\beta}  v \|^{2}_{L^{\infty,2}}+\f{C \rho^{\f{13\alpha-8}{\alpha}}}{(\rho-r)^{8}}\|u \|^{4}_{L^{p,q}}.
\end{align}
Combining the H\"older inequality and \eqref{13.17} ensures that
\begin{align}
\wgr{V_{4}}&\leq \f{\wgr{C}}{\rho-r} \|v\phi^{\beta}\|_{L^{\infty,2}}\| \Pi_{22}\|_{L^{1,2}}\nonumber\\
&\leq \|v\phi^{\beta}\|_{L^{\infty,2}}\f{C\rho^{ \f{9\alpha-8}{2\alpha}}}{(\rho-r)^{2}}\|u \|^{2}_{L^{p,q}}\nonumber\\
&\leq\f{1}{128}\| \phi^{\beta}  v \|^{2}_{L^{\infty,2}}+\f{C\rho^{ \f{9\alpha-8}{ \alpha}}}{(\rho-r)^{4}}\|u \|^{4}_{L^{p,q}}.
\end{align}
Arguing as in the proof of the last inequality, we deduce that
$$\ba
\wgr{V_{5}}&\leq \f{C }{(\rho-r)} \|v\phi^{\beta}\|_{L^{\infty,2}}\| \Pi_{23}\|_{L^{1,2}}\\
&\leq \|v\phi^{\beta}\|_{L^{\infty,2}}\f{C\rho^{\f{11\alpha-8}{2\alpha}}}{(\rho-r)^{3}}\|u \|^{2}_{L^{p,q}}\\
&\leq\f{1}{128}\| \phi^{\beta}  v \|^{2}_{L^{\infty,2}}+\f{C\rho^{\f{11\alpha-8}{\alpha}}}{(\rho-r)^{6}}\|u \|^{4}_{L^{p,q}}.
\ea$$
We derive from the Cauchy-Schwarz inequality and \eqref{3.2} that
\begin{align}
 \iint_{Q(\rho)}|\nabla( v \phi^{\beta})|^{2} dxds
\leq&2\B( \iint_{Q(\rho)}|\nabla    v|^{2}\phi^{2\beta} dxds
+\beta^{2} \iint_{Q(\rho)}|\nabla\phi|^{2}|v|^{2}\phi^{2\beta-2}dxds\B) \nonumber\\
\leq&2  \iint_{Q(\rho)}|\nabla    v|^{2}\phi^{2\beta} dxds
+\wgr{\f{C\rho^{\f{5\alpha-4}{ \alpha}} }{(\rho-r)^{2}}\| u \|^{2}_{L^{p,q}(Q( \rho ))}}.\label{cz}
\end{align}
Inserting \wgr{all these estimates} into \eqref{wloc1} and using \eqref{cz}, we conclude  that
\begin{align}
  &\sup_{-\rho^{2}\leq t\leq0}\int_{B(\rho)}|v\phi^{\beta}|^2   d  x+ \iint_{Q(\rho)}\big|\nabla( v\phi^{\beta})\big|^2  d  x d  \tau\nonumber\\\leq& \f{1}{4} \B(\|v\phi^{\beta}\|_{L^{2,\infty}(Q(\rho))}^{2}+\|\nabla (v\phi^{\beta})\|_{L^{2}(Q(\rho))}^{2}\B) +\f{C\rho^{\f{5\alpha-4}{ \alpha}} }{(\rho-r)^{2}}\| u \|^{2}_{L^{p,q}(Q( \rho ))}\nonumber\\
  &+\f{C}{(\rho-r)^{\f{2}{\alpha-1}}}\rho^{3 } \|u\|_{L^{p,q}(Q(\rho))}^{\f{2\alpha}{\alpha-1}}+
 \f{C\rho^{\f{10\alpha-8}{ \alpha}} }{(\rho-r)^{5}}\| u \|^{4}_{L^{p,q}(Q( \rho ))}+\f{1}{16}\| \nabla u\|^{2}_{L^{2}(Q(\rho))}\nonumber\\&
+\wgr{\f{C\rho^\f{13\alpha-8}{ \alpha} }{(\rho-r)^{8}}}\| u \|^{4}_{L^{p,q}(Q( \rho ))}
 +\f{C\rho^{ \f{9\alpha-8}{ \alpha}}}{(\rho-r)^{4}}\|u \|^{4}_{L^{p,q}}+\f{C\rho^{\f{11\alpha-8}{\alpha}}}{(\rho-r)^{6}}\|u \|^{4}_{L^{p,q}}\wgr{.}\nonumber
  \end{align}
This in turn implies
  \begin{align}
  &\sup_{-\rho^{2}\leq t\leq0}\int_{B(\rho)}|v\wgr{\phi^{\beta}}|^2   d  x+ \iint_{Q(\rho)}\big|\nabla( v\wgr{\phi^{\beta}})\big|^2  d  x d  \tau\nonumber\\\leq&     \f{C\rho^{\f{5\alpha-4}{ \alpha}} }{(\rho-r)^{2}}\| u \|^{2}_{L^{p,q}(Q( \rho ))}+
\f{C}{(\rho-r)^{\f{2}{\alpha-1}}}\rho^{3 } \|u\|_{L^{p,q}(Q(\rho))}^{\f{2\alpha}{\alpha-1}}\nonumber\\&
 +
 \f{C\rho^{\f{10\alpha-8}{ \alpha}} }{(\rho-r)^{5}}\| u \|^{4}_{L^{p,q}(Q( \rho ))}+\f{1}{16}\| \nabla u\|^{2}_{L^{2}(Q(\rho))}\nonumber\\&
+\wgr{\f{C\rho^{\f{13\alpha-8}{ \alpha}} }{(\rho-r)^{8}}}\| u \|^{4}_{L^{p,q}(Q( \rho ))}
 +\f{C\rho^{ \f{9\alpha-8}{ \alpha}}}{(\rho-r)^{4}}\|u \|^{4}_{L^{p,q}}+\f{C\rho^{\f{11\alpha-8}{\alpha}}}{(\rho-r)^{6}}\|u \|^{4}_{L^{p,q}}\wgr{.}\label{keyl}
  \end{align}
\wgr{The interior estimate} of harmonic function
  \eqref{h1} and \eqref{wp1} implies that
$$\ba\|\nabla\Pi_{h}\|^{2}_{L^{2,6}Q(r)}&\leq \f{Cr^{\f{1}{6}\cdot3\cdot2}}{(\rho-r)^{2\cdot3\cdot\f{1}{2}}}
\|\nabla\Pi_{h}\|^{2}_{L^{2}Q(\rho)}\\
&\leq\f{\wgr{C}\rho^{\f{6\alpha-4}{\alpha}}}{(\rho-r)^{ 3}}\|u\|^{2}_{L^{p,q}
 (Q(\rho))}.
\ea$$
With the help of the triangle inequality, interpolation inequality  \eqref{sampleinterplation} and the last inequality, we get
 \begin{align}
 \|u\|^{2}_{L^{2,6}(Q(r))} \leq& \|v\|^{2}_{L^{2,6}(Q(r))}+\|\nabla\Pi_{h}\|^{2}_{L^{2,6}(Q(r))}\nonumber \\
\leq& C\B\{\|v\|_{L^{2,\infty}(Q(r))}^{2}+\|\nabla v\|_{L^{2}(Q(r))}^{2}\B\}+\f{\wgr{C}\rho^{\f{6\alpha-4}{\alpha}}}{(\rho-r)^{ 3}}\|u\|^{2}_{L^{p,q}
 (Q(\rho))} \label{l2l6}.
\end{align}
Employing \eqref{h1} and \wgr{\eqref{2.3.8}} once again, we have the estimate
$$
 \|\nabla^{2} \Pi_{h}\|^{2}_{L^{2}(Q(r))}\leq \f{Cr^{3}}{(\rho-r)^{3+2\cdot1}} \|\nabla\Pi_{h}\|^{2}_{L^{2}(Q(\f{r+\rho}{2}))}\leq \f{\wgr{C}\rho^{\f{8\alpha-4}{\alpha}}}{\wgr{(\rho-r)^{5}}}\|u\|^{2}_{L^{p,q}
 (Q(\rho))}.
$$
This together with the triangle inequality and \eqref{keyl} leads to
\begin{align}
  \|\nabla u\|^{2}_{L^{2}(Q(r))}\leq & \|\nabla v\|^{2}_{L^{2}(Q(r))}+
 \|\nabla^{2} \Pi_{h}\|^{2}_{L^{2}(Q(r))}\nonumber\\
 \leq &  \B\{\wgr{1+\f{\rho^{3}}{(\rho-r)^{3}}}\B\}\f{C\rho^{\f{5\alpha-4}{ \alpha}} }{(\rho-r)^{2}}\| u \|^{2}_{L^{p,q}(Q( \rho ))}+
 \f{C}{(\rho-r)^{\f{2}{\alpha-1}}}\rho^{3 } \|u\|_{L^{p,q}(Q(\rho))}^{\f{2\alpha}{\alpha-1}}\nonumber\\&
 +
 \f{C\rho^{\f{10\alpha-8}{ \alpha}} }{(\rho-r)^{5}}\| u \|^{4}_{L^{p,q}(Q( \rho ))}+\f{1}{16}\| \nabla u\|^{2}_{L^{2}(Q(\rho))}\nonumber\\&
+\wgr{\f{C\rho^{\f{13\alpha-8}{ \alpha}} }{(\rho-r)^{8}}}\| u \|^{4}_{L^{p,q}(Q( \rho ))}
 +\f{C\rho^{ \f{9\alpha-8}{ \alpha}}}{(\rho-r)^{4}}\|u \|^{4}_{L^{p,q}}+\f{C\rho^{\f{11\alpha-8}{\alpha}}}{(\rho-r)^{6}}\|u \|^{4}_{L^{p,q}}.\label{tidul2}
\end{align}
As an immediate consequence of  \eqref{l2l6} and \eqref{tidul2}, we get
  \begin{align}
&\|u\|^{2}_{L^{2,6}(Q(r))}+ \|\nabla u\|^{2}_{L^{2}(Q(r))}
\nonumber\\\leq&
    \B\{1+\f{\rho }{(\rho-r) }+\f{\rho^{3}}{(\rho-r)^{3}}\B\}\f{C\rho^{\f{5\alpha-4}{ \alpha}} }{(\rho-r)^{2}}\| u \|^{2}_{L^{p,q}(Q( \rho ))}+
 \f{C}{(\rho-r)^{\f{2}{\alpha-1}}}\rho^{3 } \|u\|_{L^{p,q}(Q(\rho))}^{\f{2\alpha}{\alpha-1}}\nonumber\\&
 +
 \f{C\rho^{\f{10\alpha-8}{ \alpha}} }{(\rho-r)^{5}}\| u \|^{4}_{L^{p,q}(Q( \rho ))}+\f{1}{16}\| \nabla u\|^{2}_{L^{2}(Q(\rho))}\nonumber\\&
+\wgr{\f{C\rho^{\f{13\alpha-8}{ \alpha}} }{(\rho-r)^{8}}}\| u \|^{4}_{L^{p,q}(Q( \rho ))}
 +\f{C\rho^{ \f{9\alpha-8}{ \alpha}}}{(\rho-r)^{4}}\|u \|^{4}_{L^{p,q}}+\f{C\rho^{\f{11\alpha-8}{\alpha}}}{(\rho-r)^{6}}\|u \|^{4}_{L^{p,q}}.\nonumber
\end{align}
Now, we are in a position to apply     \cite[Lemma V.3.1,   p.161]{[Giaquinta]} to the latter to find that
\begin{align}
\|u\|^{2}_{L^{2,6}(Q(\f{R}{2}))}&+ \|\nabla u\|^{2}_{L^{2}(Q(\f{R}{2}))}\nonumber\\
\leq
  CR^{\f{3\alpha-4}{\alpha}}\|  u\|^{2}_{L^{p,q}(Q(R))}
  +&CR^{ \f{5\alpha-8}{\alpha}}\|  u\|^{4}_{L^{ p,q} (Q(R))}
  +CR^{\f{3\alpha-5}{\alpha-1}}\|  u\|^{\f{2\alpha}{\alpha-1}}_{L^{p,q}(Q(R))}.
  \end{align}
  This achieves the proof of this  proposition.
\end{proof}

\section{Induction arguments and proof of Theorem \ref{the1.2}}
\label{sec4}
In this section, we utilize \wgr{an especial} case of \eqref{wwzc} with $p=q=(5+2\delta)/2$ and induction arguments to
finish the proof of Theorem \ref{the1.2}. To this end, we
begin with a  critical proposition, which can be seen   as the bridge between the previous step and the next step for the given statement in the
induction arguments.
\begin{prop}\label{keyinindu}
Assume that $\iint_{\tilde{Q}(r)}  |v|^{\f{10}{3}}\leq r^{5}N,$ with
$r_{k}\leq r\leq r_{k_{0}}$.
There is a constant $C$ such that the following result holds. For any given $(x_{0},\,t_{0})\in\mathbb{R}^{n}\times \mathbb{R}^{-}$ and $k_{0}\in\mathbb{N}$, we have
for any $k>k_{0}$,
 \begin{align}
&\sup_{-r_{k}^{2}\leq t-t_0\leq 0}\fbxo |v|^{2}
+r_{k}^{-3}\iint_{\tilde{Q} _{k}}
 |\nabla v |^{2}\nonumber\\
\leq&  C\sup_{-r_{k_{0}}^{2}\leq t-t_0\leq 0}\fbxozero|v|^{2}+C\sum^{k}_{l=k_{0}} r_{l}\Big(\fqxol |v|^{\f{10}{3}}   \Big)^{\f{9}{10}}\nonumber \\
&
+C\sum^{k}_{l=k_{0}}r^{ \f{1+2\delta}{5+2\delta}}_{l} \B(\fqxol   |v|^{\f{10}{3} } \Big)^{\f{3}{5}}
\Big(\iint_{\wgr{{Q}_{1}}} |u|^{\f{5+2\delta}{2}}   \Big)^{\f{2}{5+2\delta}}   \nonumber\\&+C\sum^{k}_{l=k_{0}}r^{ \f{6+4\delta}{5+2\delta}}_{l} \B(\fqxol  |v|^{\f{10}{3} } \Big)^{\f{3}{5}}
\Big(\iint_{\wgr{{Q}_{1}}} |u|^{\f{5+2\delta}{2}}   \Big)^{\f{2}{5+2\delta}} \\&+C\sum^{k}_{l=k_{0}}r_{l}^{\f{2+4\delta}{\wgr{5+2\delta}}}r_{l}^{-\f{3}{2}}\|v\|_{L^{\f{5+2\delta}
{1+2\delta},\f{6(5+2\delta)}{11-2\delta}}(\tilde{Q}_{l})} \|u\|^{2}_{
L^{\f{5+2\delta}{2}} (\wgr{{Q}(1)})} \nonumber\\&
+C\sum^{k }_{l=k_{0}} r_{l}\B(\fqxol  |v|^{\f{10}{3} } \Big)^{\f{3}{10}}
\Big(\iint_{\tilde{Q}_{\wgr{k_0}} } |\nabla u|^{2 }   \Big)^{\f{1}{2}}
 \nonumber\\&+
 C
\sum^{k}_{l=k_{0}}r_{l}^{\wgr{\f{1+2\delta}{5+2\delta}}}
\Big(\fqxol |v|^{\f{10}{3} }   \Big)^{\f{3}{10}}\B\{
 N^{3/5}+N^{\f{3}{10}} \|u\|_{L^{\f{5+2\delta}{2}}(\wgr{{Q}(1)})}\B\}\nonumber\\
  &+C\sum^{k}_{l=k_{0}}r_{l}^{\f{\wgr{1+2\delta}}{5+2\delta}}r_{l}^{-\f{3}{2}}
  \|v\|_{L^{\f{5+2\delta}{1+2\delta},
  \f{6(5+2\delta)}{11-2\delta}}(\tilde{Q}_{l} )} \|u\|^{2}_{
L^{\f{5+2\delta}{2}} (\wgr{{Q}(1)})}.\label{eq4.1}
 \end{align}
\end{prop}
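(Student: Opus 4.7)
The plan is to apply the local energy inequality \eqref{wloc1} at each dyadic scale $r_l$ with $k_0 \leq l \leq k$, then sum the resulting one-step estimates. First I would fix, for each $l$ in that range, a nonnegative cutoff function $\phi_l \in C_0^\infty(\tilde{Q}_l)$ with $\phi_l \equiv 1$ on $\tilde{Q}_{l+1}$ and satisfying $|\nabla\phi_l| \lesssim r_l^{-1}$, $|\nabla^2\phi_l| + |\partial_t\phi_l| \lesssim r_l^{-2}$. Plugging $\phi_l$ into \eqref{wloc1} and writing $u = v - \nabla\Pi_h$ to replace $u$ everywhere it appears in the right-hand side, I obtain a Caccioppoli-type estimate that controls $\sup_t \int_{\tilde{B}_{l+1}} |v|^2 + \iint_{\tilde{Q}_{l+1}} |\nabla v|^2$ by surface integrals at scale $r_l$. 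Normalizing both sides by dividing through by $r_{l+1}^3$ gives the scale-invariant quantities appearing in \eqref{eq4.1}, and the standard telescoping/iteration argument (as in \cite{[CW],[JWZ]}) will convert these one-step bounds into the summed form $\sum_{l=k_0}^k$ once the $\sup|v|^2$ base term at scale $k_0$ is isolated.

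Next I would estimate each of the five terms on the right-hand side of \eqref{wloc1}. The linear term $\iint |v|^2(\Delta\phi + \partial_t\phi)$ gives, after Hölder in space-time against $|v|^{10/3}$, a contribution proportional to $r_l (\fqxol |v|^{10/3})^{9/10}$, matching the first sum. The convection term $\iint |v|^2 u\cdot\nabla\phi$ splits via $u = v - \nabla\Pi_h$ into a cubic-in-$v$ piece, which by Hölder is controlled by $r_l^{(1+2\delta)/(5+2\delta)}(\fqxol|v|^{10/3})^{3/5}\|u\|_{L^{(5+2\delta)/2}(Q(1))}$ after using $v = u + \nabla\Pi_h$ on one factor together with \eqref{h1}–\eqref{h2} to bound $\nabla\Pi_h$ by the same $L^{(5+2\delta)/2}$ norm; the piece involving $\nabla\Pi_h$ gives the next summand with a higher power of $r_l$. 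The harmonic term $\iint \phi\, u\otimes v : \nabla^2\Pi_h$ is handled analogously using interior harmonic estimates on $\nabla^2\Pi_h$. For the $\Pi_1$ term, \eqref{p1} reduces matters to $\|\nabla u\|_{L^2(\tilde{Q}_{k_0})}$, producing the $\B(\iint_{\tilde{Q}_{k_0}}|\nabla u|^2\B)^{1/2}$ contribution.

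The main obstacle is the $\Pi_2$ term $\iint \phi\, \Pi_2\, v\cdot\nabla\phi$, for which the pointwise representation $\Pi_2 = -\partial_i\partial_j\Gamma * (u_i u_j)$ would produce singular integrals whose domain is inconsistent with the localized integration in $\tilde{Q}_l$. Following the key observation highlighted in the introduction, I would instead apply the representation to $\eta \Pi_2$ with $\eta$ a cutoff adapted to $\tilde{B}_l$, exactly as in \eqref{p}, getting the decomposition $\eta\Pi_2 = \Pi_{21} + \Pi_{22} + \Pi_{23}$. The terms $\Pi_{22},\Pi_{23}$ are handled by Young's convolution inequality as in \eqref{13.17}–\eqref{13.18}, yielding the contributions with $\|v\|_{L^{(5+2\delta)/(1+2\delta),6(5+2\delta)/(11-2\delta)}}\|u\|^2_{L^{(5+2\delta)/2}}$ after Hölder in space-time against $v$. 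For $\Pi_{21}$, the further split into $\Pi_{211}$ (purely $v\otimes v$), $\Pi_{212}$ (mixed $v\otimes\nabla\Pi_h$), and $\Pi_{213}$ (purely $\nabla\Pi_h\otimes\nabla\Pi_h$) is treated via Calderón–Zygmund estimates: $\Pi_{211}$ produces the $(\fqxol|v|^{10/3})^{3/10} N^{3/5}$ contribution using the hypothesis $\iint_{\tilde{Q}(r)}|v|^{10/3} \leq r^5 N$, while the mixed and harmonic pieces contribute $N^{3/10}\|u\|_{L^{(5+2\delta)/2}}$ and further $\|u\|^2_{L^{(5+2\delta)/2}}$ terms by invoking \eqref{h1}.

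Finally, summing the one-step bound over $l = k_0,\ldots, k$ and absorbing the left-hand side terms of the form $\frac{1}{16}\|\nabla v\|^2$ that appear with small constants (as in the proof of Proposition \ref{prop1.2}) via Lemma \ref{iter1} yields \eqref{eq4.1}. The delicate point throughout is to track carefully the scaling exponents $r_l^{\alpha}$ so that each contribution in \eqref{eq4.1} carries exactly the power dictated by the dimensional balance at the dyadic scale $r_l$; getting these exponents correct, especially the interplay between $(5+2\delta)/2$ and $10/3$ through Hölder's inequality and harmonic function estimates, is where most of the bookkeeping effort will be spent.
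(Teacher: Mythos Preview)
Your overall strategy has a genuine gap. You propose to apply the local energy inequality with a separate compactly supported cutoff $\phi_l$ at each dyadic scale and then iterate the resulting one-step estimates. But with a plain cutoff, the linear term $\iint |v|^{2}(\Delta\phi_l+\partial_t\phi_l)$ is only bounded by $C r_l^{-2}\iint_{\tilde{Q}_l}|v|^{2}\le C\sup_t\int_{\tilde{B}_l}|v|^{2}$; after normalizing by $r_{l+1}^{-3}$ this yields
\[
r_{l+1}^{-3}\sup_t\int_{\tilde{B}_{l+1}}|v|^{2}\le C\,r_{l}^{-3}\sup_t\int_{\tilde{B}_{l}}|v|^{2}+\cdots
\]
with $C>1$, and iterating from $l=k_0$ to $l=k$ produces a factor $C^{\,k-k_0}$ in front of the base term rather than the bounded constant in \eqref{eq4.1}. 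No amount of bookkeeping with H\"older exponents repairs this, and Lemma~\ref{iter1} does not apply here (it is designed for a different iteration structure and is used only in Section~\ref{sec3}).

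The paper avoids this entirely by taking a \emph{single} test function $\varphi=\phi\,\Gamma$, where $\Gamma$ is the backward heat kernel centered at scale $r_k$ and $\phi$ is a fixed cutoff at scale $r_{k_0}$. The identity $\Gamma_t+\Delta\Gamma=0$ kills the linear term except on the annulus where $\phi$ varies, which produces directly the base contribution $C\sup\fbxozero|v|^{2}$ with no iteration at all; the sums over $l$ then arise from the pointwise decay $|\nabla(\phi\Gamma)|\le C r_{l}^{-4}$ on $\tilde{Q}_l\setminus\tilde{Q}_{l+1}$ when the nonlinear integrals are split into dyadic annuli. This is the essential mechanism you are missing. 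Relatedly, for $\Pi_1$ one cannot simply quote \eqref{p1}: one must subtract the mean $\overline{\Pi_1}_l$ on each annulus (using divergence-free $v$) and use the interior estimate \eqref{h2} for the harmonic function $\Pi_1$ to pass from scale $l$ back to scale $k_0$. For $\Pi_2$, the paper does \emph{not} reuse the representation \eqref{p} from Section~\ref{sec3}; instead it decomposes $\Pi_2=\Pi_{21}+\Pi_{22}$ according to \eqref{2.4.10}--\eqref{2.4.11} and applies the oscillation Lemma~\ref{CW} to each piece, which is precisely where the hypothesis $\iint_{\tilde{Q}(r)}|v|^{10/3}\le r^{5}N$ enters.
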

\begin{proof}
In order to simplify the presentation, we suppose $(x_{0},t_{0})=(0,0)$.
Let us introduce the backward heat kernel
$$
\Gamma(x,t)=\frac{1}{4\pi(r_{k}^2-t)^{3/2}}e^{-\frac{|x|^2}
{4(r_{k}^2-t)}}.
$$
Furthermore, we choose the smooth cut-off \wgr{function} below
$$\phi (x,t)=\left\{\ba
&1,\,~~(x,t)\in Q(r_{k_{0}+1}),\\
&0,\,~~(x,t)\in Q^{c}(\f{3}{2}r_{k_{0}+1});
\ea\right.
$$
 satisfying
$$
\wgr{0\leq \phi \leq1}~~\text{and}~~~~r^{2}_{k_{0}}|\partial_{t}\phi  (x,t)|
+r^{l}_{k_{0}}|\partial^{l}_{x}\phi (x,t)|\leq C.
$$
Easy calculations lead to the following \wgr{fact:}
\begin{enumerate}[(i)]
\item There is a constant $c>0$ independent of $r_{k}$ such that, for any $(x,t)\in Q(r_{k})$,
$$
 \Gamma(x,t)\geq c r_{k}^{-3}.
$$
\item For any $(x,t)\in Q(\wgr{r_{k}})$, we have
$$
|\Gamma(x,t)\phi(x,t)| \leq C r_{k}^{-3},~~~~~|\nabla\phi(x,t)\Gamma(x,t)| \leq C r_{k}^{-4}, ~~~~~|\phi(x,t) \nabla\Gamma(x,t)|\leq C r_{k}^{-4}. $$
\item For any $(x,t)\in Q(3r_{k_{0}}/4)\backslash Q(r_{k_{0}}/2)$, one can deduce that
     $$\Gamma(x,t)\leq Cr_{k_{0}}^{-3},~\partial_{i}\Gamma(x,t)\leq Cr_{k_{0}}^{-4},$$ from which it follows that
$$
|\Gamma(x,t)\partial_{t}\wgr{\phi}(x,t)|+|\Gamma(x,t) \Delta\wgr{\phi}(x,t)|+|\nabla\wgr{\phi}(x,t)\nabla\Gamma(x,t)| \leq Cr_{k_{0}}^{-5}.
$$
\item \label{property2} For any $(x,t)\in Q_{l} \backslash Q_{l+1} $,
    $$
    \Gamma\leq  C r_{l+1}^{-3},~\nabla\Gamma \leq C  r_{l+1}^{-4}.
    $$
\end{enumerate}

Now, plugging  $\varphi_{1}=\phi \Gamma$  into the  local energy inequality \eqref{wloc1} and utilizing the fact that $\Gamma_{t}+\Delta\Gamma=0$, we arrive at that
 \begin{align}
  &\int_{B_{1}}|v|^2\phi (x,t)\Gamma  + \int^{t}_{-r^{2}_{k_{0}}}\int_{B_{1}}\big|\nabla v\big|^2\phi (x,s)\Gamma  \nonumber\\  \leq&   \int^{t}_{-r^{2}_{k_{0}}}\int_{B_{1}} | v|^2( \Gamma\Delta \phi +\Gamma\partial_{t}\phi
  +2\nabla\Gamma\nabla\phi )   \nonumber\\&+\int^{t}_{-r^{2}_{k_{0}}}\int_{B_{1}}|v|^{2}v\cdot\nabla(\phi  \Gamma)-|v|^{2}\nabla\Pi_{h} \cdot\nabla\wgr{(\phi\Gamma)} \nonumber\\
& +\int^{t}_{-r^{2}_{k_{0}}}\int_{B_{1}}\Gamma\phi ( v\otimes v-\wgr{\nabla\Pi_{h}\otimes v}:\nabla^{2}\Pi_{h} ) \nonumber\\&  +\int^{t}_{-r^{2}_{k_{0}}}\int_{B_{1}} \Pi_{1}v\cdot\nabla(\Gamma\phi )  +\int^{t}_{-r^{2}_{k_{0}}}\int_{B_{1}}  \Pi_{2}v\cdot\nabla(\Gamma\phi ),  \label{loc21}
\end{align}
where
$$
\nabla\Pi_{1}=\mathcal{W}_{2,B_{1}}(\Delta u),~~~\nabla\Pi_{2}=-\mathcal{W}_{\f{5+2\delta}{2},B_{1}}(\nabla\cdot(u\otimes u) ).
$$
First, we give the low bound estimates of the terms on the left hand side of   inequality \eqref{loc21}.
Indeed, \wgr{by virtue  of (i)}, we know that
 $$\int_{\wgr{B_{1}}} |v|^{2} \phi  \Gamma\geq \wgr{c}{\fbx} |v|^{2},$$
  and
$$
\int^{t}_{-r^{2}_{k_{0}}}\int_{B_{1}}\phi  \Gamma
 |\nabla v |^{2} \geq \wgr{c}r_{k}^{-3}\iint_{Q _{k}}
 |\nabla v |^{2}.$$
 Second, we focus on the estimation of the right hand side of \eqref{loc21}.
As the support of  $\partial_{t}\phi $
is included in
$\wgr{{Q}}(\f{3r_{k_{0}}}{4})/\wgr{{Q}}(\f{r_{k_{0}}}{2}),$
 we deduce
\be\int^{t}_{-r^{2}_{k_{0}}}\int_{B_1}
 |v|^{2}
\Big|\Gamma\Delta \phi +\Gamma\partial_{t}\phi +2\nabla\Gamma\nabla\phi \Big|
\leq C\sup_{-r_{k_{0}}^{2}\leq t\leq0}\fbxozeroo |v|^{2} .
\ee
 The H\"older  inequality and \eqref{property2} entails that
\begin{align}
&\iint_{Q_{k_{0}}}|v|^{2}v\cdot\nabla(\phi  \Gamma) d  \tau \nonumber\\
\leq&\sum^{k-1}_{l=k_{0}}\iint_{Q_{l}/Q_{l+1}}
  |v|^{3}  |\nabla(\phi  \Gamma)|  +\iint_{Q_{k}}  |v|^{3} |\nabla(\phi  \Gamma)|   \nonumber\\
\leq&\wgr{C}\sum^{k}_{l=k_{0}}r_{l}^{-4}\iint_{Q_{l}}
   |v|^{3}  \nonumber\\
 \leq& C\sum^{k}_{l=k_{0}}r_{l}^{-4}\Big(\iint_{Q_{l} } |v|^{\f{10}{3}}   \Big)^{\f{9}{10}}r^{^{\f{1}{2}}}_{l}\nonumber\\
 \leq& C\sum^{k}_{l=k_{0}} r_{l}\Big(\fqxolo |v|^{\f{10}{3}}   \Big)^{\f{9}{10}}\nonumber.
\end{align}
Similar arguments lead to
\begin{align}
&\iint_{Q_{k_{0}}}|v|^{2}\nabla\Pi_{h} \cdot\nabla(\phi\Gamma)  \nonumber\\
\leq&\sum^{k-1}_{l=k_{0}}\iint_{Q_{l}/Q_{l+1}}
  |v|^{2}| \nabla\Pi_{h}| |\nabla(\phi  \Gamma)|  +\iint_{Q_{k}} |v|^{2}| \nabla\Pi_{h}|  |\nabla(\phi  \Gamma)|   \nonumber\\
\leq&\wgr{C}\sum^{k}_{l=k_{0}}r_{l}^{-4}\iint_{Q_{l}}
 |v|^{2}| \nabla\Pi_{h}|  \nonumber\\
 \leq& C\sum^{k}_{l=k_{0}}r_{l}^{-4}\Big(\iint_{Q_{l} } |v|^{\f{10}{3} }   \Big)^{\f{3}{5}}
\Big(\iint_{Q_{l} } |\nabla\Pi_{h}|^{\f{5+2\delta}{2}}   \Big)^{\f{2}{5+2\delta}}r^{\f{4\delta}{5+2\delta}}_{l}\nonumber\\
\leq& C\sum^{k}_{l=k_{0}}r^{-4+3+ \f{4\delta}{5+2\delta} }_{l} \B(\fqxolo  |v|^{\f{10}{3} } \Big)^{\f{3}{5}}r^{3\cdot\f{2}{5+2\delta}}_{l}
\Big(\iint_{Q_{1} } |\nabla\Pi_{h}|^{\f{5+2\delta}{2}}   \Big)^{\f{2}{5+2\delta}}\nonumber\\
\leq& C\sum^{k}_{l=k_{0}}r^{ \f{1+2\delta}{5+2\delta}}_{l} \B(\fqxolo  |v|^{\f{10}{3} } \Big)^{\f{3}{5}}
\Big(\iint_{Q_{1} } |u|^{\f{5+2\delta}{2}}   \Big)^{\f{2}{5+2\delta}}.\label{3ref}
\end{align}
Similar, by H\"older's inequality and \eqref{h1}, we get
\begin{align}
&\iint_{Q_{k_{0}}}|v|^{2}|\nabla^{2} \Pi_{h}| (\phi\Gamma) \nonumber\\
\leq&\sum^{k-1}_{l=k_{0}}\iint_{Q_{l}/Q_{l+1}}
  |v|^{2}| \wgr{\nabla^2\Pi_{h}}| |\wgr{(\phi  \Gamma)}|  +\iint_{Q_{k}} |v|^{2}| \nabla^{2} \Pi_{h}|  | (\phi  \Gamma)|   \nonumber\\
\leq&\wgr{C}\sum^{k}_{l=k_{0}}r_{l}^{-3}\iint_{Q_{l}}
 |v|^{2}| \nabla^{2} \Pi_{h}|  \nonumber\\
 \leq& C\sum^{k}_{l=k_{0}}r_{l}^{-3}\Big(\iint_{Q_{l} } |v|^{\f{10}{3} }   \Big)^{\f{3}{5}}
\Big(\iint_{Q_{l} } |\nabla^{2}\Pi_{h}|^{\f{5+2\delta}{2}}   \Big)^{\f{2}{5+2\delta}}r^{\f{4\delta}{5+2\delta}}_{l}\nonumber\\
\leq& C\sum^{k}_{l=k_{0}}r^{-3+3+ \f{4\delta}{5+2\delta} }_{l} \B(\fqxolo  |v|^{\f{10}{3} } \Big)^{\f{3}{5}}r^{3\cdot\f{2}{5+2\delta}}_{l}
\Big(\iint_{Q_{1} } |\nabla\Pi_{h}|^{\f{5+2\delta}{2}}   \Big)^{\f{2}{5+2\delta}}\nonumber\\
\leq& C\sum^{k}_{l=k_{0}}r^{ \f{6+4\delta}{5+2\delta}}_{l} \B(\fqxolo  |v|^{\f{10}{3} } \Big)^{\f{3}{5}}
\Big(\iint_{Q_{1} } |u|^{\f{5+2\delta}{2}}   \Big)^{\f{2}{5+2\delta}}\label{343f}.
\end{align}
From the H\"older  inequality, \eqref{property2}, \eqref{h1} and \wgr{\eqref{ph}}, we deduce that
\begin{align}
 &\iint_{Q_{k_{0}}}\phi  \Gamma   |v| |\nabla\Pi_{h}||\nabla^{2}\Pi_{h}|\nonumber\\
\leq&  C\sum^{k}_{l=k_{0}}r_{l}^{-3}\|v\|_{L^{\f{5+2\delta}{1+2\delta},\f{6(5+2\delta)}{11-2\delta}}(Q_{l})}\|\nabla\Pi_{h}\|_{
L^{\f{5+2\delta}{2},\f{12(5+2\delta)}{19+14\delta}}(Q_{l})}\|\nabla^{2}\Pi_{h}\|_{
L^{\f{5+2\delta}{2},\f{12(5+2\delta)}{19+14\delta}}(Q_{l})}\nonumber\\
\leq& C\sum^{k}_{l=k_{0}}r_{l}^{-3}\|v\|_{L^{\f{5+2\delta}{1+2\delta},\f{6(5+2\delta)}{11-2\delta}}(Q_{l})}r_{l}^{6\cdot\f{19+14\delta}{12(5+2\delta)}}\|\nabla\Pi_{h}\|_{
\wgr{L^{\f{5+2\delta}{2}}}(Q(1))}\|\wgr{\nabla\Pi_{h}}\|_{
\wgr{L^{\f{5+2\delta}{2}}}(Q(1))}\nonumber\\
\leq& C\sum^{k}_{l=k_{0}}r_{l}^{\f{2+4\delta}{ (5+2\delta)}}r_{l}^{-\f{3}{2}}\|v\|_{L^{\f{5+2\delta}{1+2\delta},\f{6(5+2\delta)}{11-2\delta}}(Q_{l})} \|u\|^{2}_{
L^{\f{5+2\delta}{2}} (Q(1))} \label{3343}.
\end{align}
We introduce  $\chi_{l}=1$ on $|x|\leq7/8 r_{l}$ and $\chi_{l}=0 $ if $ |x|\geq r_{l}$.
\wgr{It is obvious that $\chi_{k_{0}}\phi\Gamma=\phi\Gamma$ on $Q_{k_{0}}$.}
By taking advantage of  the support of $(\chi_{l}-\chi_{l+1})$, we derive from \eqref{property2}  that $|\nabla((\chi_{l}-\chi_{l+1})\phi  \Gamma)|\leq Cr^{-4}_{l+1}$.
\wgr{Applying  (ii)}  yields that $|\nabla( \chi_{k} \phi  \Gamma)|\leq Cr^{-4}_{k} $. Therefore, we write
\begin{align}
\iint_{Q_{k_{0}}}v\cdot\nabla(\phi  \Gamma) \Pi_{1}=&
\sum^{k-1}_{l=k_{0}}\iint_{Q_{l}}v\cdot\nabla((\chi_{l}-\chi_{l+1})\phi  \Gamma) \Pi_{1}+\iint_{Q_{k}}v\cdot\nabla( \chi_{k} \phi  \Gamma) \Pi_{1}\nonumber\\
=&
\sum^{k-1}_{l=k_{0}}\iint_{Q_{l}}
v\cdot\nabla((\chi_{l}-\chi_{l+1})\phi  \Gamma) (\Pi_{1}- \overline{{\Pi_{1}}}_{l})+\iint_{Q_{k }}u\cdot\nabla( \chi_{k} \phi  \Gamma) (\Pi_{1}-\overline{{\Pi_{1}}}_{k})
\nonumber\\
\leq& C
\sum^{k-1}_{l=k_{0}}r^{-4}_{l+1}\iint_{Q_{l}}
|v||\Pi_{1}- \overline{{\Pi_{1}}}_{l}|+r^{-4}_{k}\iint_{Q_{k }}|v||\Pi_{1}-\bar{\Pi_{1}}_{k}|\nonumber\\
=&:I+II.
\end{align}
Combining the  H\"older inequality, \eqref{h2} and \wgr{\eqref{p1}} yield
\begin{align}
I\leq& C
\sum^{k-1}_{l=k_{0}}r^{-4}_{l+1}
\Big(\iint_{Q_{l} } |v|^{\f{10}{3} }   \Big)^{\f{3}{10}}
\Big(\iint_{Q_{l} } |\Pi_{1}- \overline{{\Pi_{1}}}_{l}|^{2 }   \Big)^{\f{1}{2}} r_{l}\nonumber\\
\leq& C \sum^{k-1}_{l=k_{0}}r^{-4+\f32}_{l+1}
 \B(\fqxolo  |v|^{\f{10}{3} } \Big)^{\f{3}{10}} r_{l}^{3\cdot\f{1}{2}+1}
\Big(\iint_{Q_{\wgr{k_0}} } |\Pi_{1}- \overline{{\Pi_{1}}}_{\wgr{k_0}}|^{2 }   \Big)^{\f{1}{2}} r_{l}\nonumber\\
\leq& C
\sum^{k-1}_{l=k_{0}}r_{l+1}
 \B(\fqxolo  |v|^{\f{10}{3} } \Big)^{\f{3}{10}}
\Big(\iint_{Q_{\wgr{k_0}} } |\Pi_{1}|^{2 }   \Big)^{\f{1}{2}}\nonumber\\
\leq& C
\sum^{k-1}_{l=k_{0}}r_{l+1}
 \B(\fqxolo  |v|^{\f{10}{3} } \Big)^{\f{3}{10}}
\Big(\iint_{Q_{\wgr{k_0}} } |\nabla u|^{2 }   \Big)^{\f{1}{2}},\label{2.4.2}
\end{align}
and
\begin{align}
II
\leq& C
 r_{k}
 \B(\fqxolo  |v|^{\f{10}{3} } \Big)^{\f{3}{10}}
\Big(\iint_{Q_{\wgr{k_0}} } |\nabla u|^{2 }   \Big)^{\f{1}{2}}. \label{2.4.3}
\end{align}
It follows from \eqref{2.4.2} and \eqref{2.4.3} that
$$\iint_{Q_{k_{0}}}v\cdot\nabla(\phi  \Gamma) \Pi_{1}\leq C\sum^{k }_{l=k_{0}} r_{l}\B(\fqxolo  |v|^{\f{10}{3} } \Big)^{\f{3}{10}}
\Big(\iint_{Q_{\wgr{k_0}} } |\nabla u|^{2 }   \Big)^{\f{1}{2}}.
$$
To bound the term involving $\Pi_{2}$, noting that
 $$\ba\Delta\Pi_{2}&=-\text{div\,}\text{div\,}(u\otimes u) \\&=-\text{div\,}\text{div\,}(v\otimes v-v\otimes\nabla\Pi_{h}-\nabla\Pi_{h}\otimes v+\nabla\Pi_{h}\otimes \nabla\Pi_{h}),\ea$$
 we decompose $\Pi_{2}$ into two parts
$$\Pi_{2}= \Pi_{21}+\Pi_{22}$$ with
\begin{align}
&\Delta\Pi_{21}=-\text{div\,}\text{div\,}(v\otimes v-v\otimes\nabla\Pi_{h}-\nabla\Pi_{h}\otimes v),\label{2.4.10}\\
 &\Delta\wgr{\Pi_{22}}=-\text{div\,}\text{div\,}(\nabla\Pi_{h}\otimes \nabla\Pi_{h}).\label{2.4.11}
 \end{align}

For $r_{k}\leq r\leq r_{k_{0}}$, we compute directly that
\be\label{4.8}
\iint_{Q(r)}
 |v\otimes v- \overline{ {v\otimes v }}_{\wgr{r}}|^{10/7} \leq \iint_{Q(r)}
 |v|^{20/7}\leq  Cr^{5}\B(\fqxolor  |v|^{\f{10}{3} } \Big)^{6/7}\leq   Cr^{5}N^{6/7}.
\ee
Making use of  H\"older's inequality and \eqref{h1}, we see that
\begin{align}
\|v\otimes\nabla\Pi_{h}- \overline{v\otimes\nabla\Pi_{h}}_{\wgr{r}}\|_{L^{10/7}(Q(r))} &\leq C\|v\|_{L^{\f{10}{3}}(Q(r))}\|\nabla\Pi_{h}\|_{L^{\f{5+2\delta}{2}}(Q(r))}r^{5\cdot\f{4\delta}{5(5+2\delta)}}\nonumber\\
&\leq Cr^{\f{3}{2}+\f{4\delta}{ 5+2\delta }}\B(\fqxolor|v|^{\f{10}{3}}\B)^{\f{3}{10}}r^{\f{6}{5+2\delta}}\|\nabla\Pi_{h}\|_{L^{\f{5+2\delta}{2}}(\wgr{Q(1)})}\nonumber\\
&\leq   Cr^{\f{27+14\delta}{2(5+2\delta)} }N^{\f{3}{10}} \|u\|_{L^{\f{5+2\delta}{2}}(Q(1))}.
\label{4.9}
\end{align}
Since \eqref{4.8} and \eqref{4.9} are valid, one can invoke Lemma \ref{CW} to obtain
\begin{align}
\|\Pi_{21}-\wgr{\overline{\Pi_{21}}_r}\|_{L^{\f{10}{7}}Q(r)}&\leq C
 r^{7/2}\B(\fqxolor  |v|^{\f{10}{3} } \Big)^{3/5}+Cr^{\f{27+14\delta}{2(5+2\delta)} }N^{\f{3}{10}} \|u\|_{L^{\f{5+2\delta}{2}}(Q(1))}\nonumber\\
 &\leq   Cr^{\f{27+14\delta}{2(5+2\delta)} }\B\{
  \B(\fqxolor  |v|^{\f{10}{3} } \Big)^{3/5}+N^{\f{3}{10}} \|u\|_{L^{\f{5+2\delta}{2}}(Q(1))}\B\}\wgr{.}\label{4.14}
\end{align}
For $k\leq l\leq k_{0}$, concatenating the H\"older inequality and \eqref{4.14} yield
\begin{align}
&r_{l}^{-4}\iint_{Q_{l}}|v||\Pi_{21}-\overline{ \Pi_{21}}_{B(r)}|\nonumber\\&\leq
Cr_{l}^{-4}\Big(\iint_{Q_{l} } |v|^{\f{10}{3} }   \Big)^{\f{3}{10}}
\Big(\iint_{Q_{l} } |\Pi_{2}{ -\overline{(\Pi_{2})}_{B_{l}}}|^{\f{10}{7} }   \Big)^{\f{7}{10}}\nonumber\\
&\leq
Cr_{l}^{-4+\f{3}{2}+\f{27+14\delta}{2(5+2\delta)}}\Big(\fqxolo |v|^{\f{10}{3} }   \Big)^{\f{3}{10}}\B\{
  N^{3/5}+N^{\f{3}{10}} \|u\|_{L^{\f{5+2\delta}{2}}(Q(1))}\B\}\nonumber\\
&\leq
Cr_{l}^{\f{1+2\delta}{5+2\delta}}\Big(\fqxolo |v|^{\f{10}{3} }   \Big)^{\f{3}{10}}\B\{
  N^{3/5}+N^{\f{3}{10}} \|u\|_{L^{\f{5+2\delta}{2}}(Q(1))}\B\}.\label{2.4.15}
\end{align}
According  the Poincar\'e inequality for a ball, H\"older's inequality, \eqref{h1} and \wgr{\eqref{ph}}, we get
\begin{align}
&\|\nabla\Pi_{h}\otimes \nabla\Pi_{h}- \overline{\nabla\Pi_{h}\otimes \nabla\Pi_{h}}_{\wgr{r}}\|_{L^{\f{6(5+2\delta)}{19+14\delta}}(B(r))}\nonumber\\
\leq & Cr\|\nabla\Pi_{h}  \nabla^{2}\Pi_{h} \|_{L^{\f{6(5+2\delta)}{19+14\delta}}(B(r))}\nonumber\\
\leq & Cr\|\nabla\Pi_{h}  \|_{L^{\f{12(5+2\delta)}{19+14\delta}}(B(r))}\|  \nabla^{2}\Pi_{h} \|_{L^{\f{12(5+2\delta)}{19+14\delta}}(B(r))}\nonumber\\
\leq & C r^{1+6\cdot\f{19+14\delta}{12(5+2\delta)}}\|\nabla\Pi_{h}  \|_{L^{\f{5+2\delta}{2}}\wgr{(Q(1))}}\|  \nabla \Pi_{h} \|_{L^{\f{5+2\delta}{2}}\wgr{(Q(1))}}\nonumber\\
\leq & Cr^{\f{29+18\delta}{2(5+2\delta)}}\|u  \|^{2}_{L^{\f{5+2\delta}{2}}(B(1))}\nonumber\\
\leq & Cr^{\f{27+14\delta}{2(5+2\delta)}}\|u  \|^{2}_{L^{\f{5+2\delta}{2}}(B(1))},\label{4.15}
\end{align}
which implies that
\begin{align}
\|\nabla\Pi_{h}\otimes \nabla\Pi_{h}- \overline{\nabla\Pi_{h}\otimes \nabla\Pi_{h}}_{\wgr{r}}\|_{L^{\f{5+2\delta}{4},\f{6(5+2\delta)}{19+14\delta}}(Q(r))}&\leq
 Cr^{\f{27+14\delta}{2(5+2\delta)}}\|u  \|^{2}_{L^{\f{5+2\delta}{2}}(Q(1))}.\label{4.16}
\end{align}
In view of \wgr{Lemma} \ref{CW} and \eqref{4.16}, we have
\begin{align}
\| \Pi_{22}-\bar{\Pi}_{22}\|_{L^{\f{5+2\delta}{4},\f{6(5+2\delta)}{19+14\delta}}(Q(r))}
&\leq C
 r^{\f{27+14\delta}{2(5+2\delta)}}\|u  \|^{2}_{L^{\f{5+2\delta}{2}}(B(1))}\wgr{.}\label{4.17}
\end{align}
By combining  H\"older's inequality and \eqref{4.17}, we deduce that
\begin{align}
&r_{l}^{-4}\iint_{Q_{l}}|v||\Pi_{22}-\overline{ \Pi_{22}}_{B_{l}}|\nonumber\\&\leq
r_{l}^{-4}\|v\|_{L^{\f{5+2\delta}{1+2\delta},\f{6(5+2\delta)}{11-2\delta}}(Q_{l})}\|\Pi_{22}-\overline{ \Pi_{22}}_{B_{l}}\|_{
L^{\f{5+2\delta}{4},\f{6(5+2\delta)}{19+14\delta}}(Q_{l})}\nonumber\\
&\leq C\wgr{r_l}^{\f{1+2\delta}{5+2\delta}}r_{l}^{-\f{3}{2}}
\|v\|_{L^{\f{5+2\delta}{1+2\delta},\f{6(5+2\delta)}{11-2\delta}}(Q_{l})} \|u\|^{2}_{
L^{\f{5+2\delta}{2}} (Q(1))}.\label{4.18}
\end{align}
According to  \eqref{4.18} and \eqref{2.4.15}, we know that
\begin{align}
&\iint_{Q_{k_{0}}}v\cdot\nabla(\phi  \Gamma) \Pi_{2}\nonumber\\
\leq& C
\sum^{k-1}_{l=k_{0}}r^{-4}_{l+1}\iint_{Q_{l}}
|v||\Pi_{21}- \overline{{\Pi_{21}}}_{l}|+r^{-4}_{k}\iint_{Q_{k }}|v||\Pi_{21}-\overline{ {\Pi_{21}}}_{k}|\nonumber\\&+C
\sum^{k-1}_{l=k_{0}}r^{-4}_{l+1}\iint_{Q_{l}}
|v||\Pi_{22}- \overline{{\Pi_{22}}}_{l}|+r^{-4}_{k}\iint_{Q_{k }}|v||\Pi_{22}-\overline{ {\Pi_{22}}}_{k}|\nonumber\\
\leq& \wgr{C}
\sum^{k}_{l=k_{0}}r_{l}^{\f{1+2\delta}{5+2\delta}}\Big(\fqxolo |v|^{\f{10}{3} }   \Big)^{\f{3}{10}}\B\{
  \B(\fqxolor  |v|^{\f{10}{3} } \Big)^{3/5}+\B(\fqxolor|v|^{\f{10}{3}}\B)^{\f{3}{10}} \|u\|_{L^{\f{5+2\delta}{2}}(Q(1))}\B\}\nonumber\\
  &+C\sum^{k}_{l=k_{0}}r_{l}^{\f{1+2\delta}{5+2\delta}}
  r_{l}^{-\f{3}{2}}\|v\|_{L^{\f{5+2\delta}{1+2\delta},\f{6(5+2\delta)}{11-2\delta}}(Q(r))} \|u\|^{2}_{
L^{\f{5+2\delta}{2}} (Q(1))}.
\end{align}
Finally, gathering the above inequalities \wgr{yields the desired inequality}.
\end{proof}

With Proposition \ref{keyinindu} at our disposal, we can now give the proof of
  Theorem \ref{the1.2}.
\begin{proof}[Proof of Theorem \ref{the1.2}]
By H\"older's inequality, it suffices to consider the case with $\delta$ sufficient small.
By the  interior estimate \eqref{h1}  of harmonic function   and \eqref{wp1}, we have
 \be\| \nabla \Pi_{h}\|_{L^{\infty}
(\tilde{B}(1/8))}\leq C\|\nabla \Pi_{h}\|_{L^{\f{5+2\delta}{2} }(B(1))}\leq   C\|u\|_{L^{\f{5+2\delta}{2}}(B(1))}.\label{last1}\ee
We temporarily assume that, for any Lebesgue point $(x_{0},t_{0})\in Q(1/8)$,
\be\label{temp}
|v(x_{0},t_{0})|\leq \wgr{C\varepsilon^{\frac{2(1+2\delta)}{3(5+2\delta)}}}.
\ee
It follows from \eqref{last1} and \eqref{temp} that
$$
\| u\|_{L^{\f{5+2\delta}{2},\infty}
(\tilde{Q}(1/8))}\leq \| \nabla \Pi_{h}\|_{L^{\f{5+2\delta}{2},\infty}(\tilde{Q}(1/8))}+
\| v\|_{L^{\f{5+2\delta}{2},\infty}(\tilde{Q}(1/8))}\leq C\|u\|_{L^{\f{5+2\delta}{2} }(Q(1))}\wgr{+C\varepsilon^{\frac{2(1+2\delta)}{3(5+2\delta)}}}.
$$
The well-known Serrin  regularity \wgr{criterion} implies that that $(0, 0)$ is a regular point.
Thus, it is enough to show \eqref{temp} to complete the proof of Theorem  \ref{the1.2}.
In what follows, let   $(x_{0},t_{0})\in Q(1/8)$ and $r_{k}=2^{-k}$.
According to the Lebesgue differentiation theorem, it  suffices to show
\be\label{goal}
 \fqxoo |v|^{\f{10}{3}}+\wgr{r_{k}}^{-\f{3(5+2\delta)}{2(1+2\delta)}}
 \|v\|^{\f{5+2\delta}{1+2\delta}}_{L^{\f{5+2\delta}{1+2\delta},\f{6(5+2\delta)}{11-2\delta}}(\wgr{\tilde{Q}_{k}})}
\leq \wgr{\varepsilon}^{2/3}, ~~k\geq3.\ee
First, we prove that \eqref{goal} holds true for $k=3$.  Indeed, from \eqref{keyl} with $\alpha=\f{5+2\delta}{5}$ in Section  \ref{sec3}, \wgr{Proposition \ref{prop1.2}} and hypothesis \eqref{wwz}, we get
 \begin{align}
  &\sup_{-(\f{3}{8})^{2}\leq t\leq0}\int_{B(\f{3}{8})}|v |^2   d  x+ \iint_{Q(\f{3}{8})}\big|\nabla v\big|^2  d  x d  \tau\nonumber\\\leq&   C \|u\|^{2}_{L^{\f{5+2\delta}{2}}\wgr{(Q(1/2))}}+
   C \|u\|_{L^{\f{5+2\delta}{2}}(Q(1/2))}^{\f{5+2\delta}{\delta}}\nonumber\\&
 + C\|u \|^{4}_{L^{\f{5+2\delta}{2}}\wgr{(Q(1/2))}}+\f{1}{16}\| \nabla u\|^{2}_{L^{2}(Q(1/2))}
  \nonumber\\\leq&     C\|u\|^{2}_{\wgr{L^\f{5+2\delta}{2}(Q(1))}}+C
  \|u\|_{L^{\f{5+2\delta}{2}}\wgr{(Q(1))}}^{\f{5+2\delta}{\delta}} + C\|u \|^{4}_{L^{\f{5+2\delta}{2}}\wgr{(Q(1))}}
  \nonumber\\
  \leq&      C\varepsilon^{\f{4}{5+2\delta}}.\label{2.4.23}
  \end{align}
\wgr{The interpolation inequality \eqref{sampleinterplation} leads} us to obtain
\be\label{eq3.5}\ba
 \B(\iint_{\tilde{Q}(\wgr{\f{1}{8}})}|v |^{\f{10 }{3}}\B)^{\f{3}{10 }}
&\leq C\B(\sup_{-(\f{3}{8})^{2}\leq t<0}\int_{\wgr{{B}}(\f{3}{8})}|v|^{2}\B)^{1/2}+ C\B(\iint_{\wgr{{Q}} (\f{3}{8})} |\nabla  v |^{2}\Big)^{1/2},
\ea\ee
and
$$\|v\|_{L^{\f{5+2\delta}{1+2\delta},\f{6(5+2\delta)}{11-2\delta}}(\tilde{Q}(\wgr{\f{1}{8}}))}\leq C\B(\sup_{-(\f{3}{8})^{2}\leq t<0}\int_{B(\f{3}{8})}|v|^{2}\B)^{1/2}+ C\B(\iint_{\wgr{{Q}} (\f{3}{8})} |\nabla  v |^{2}\Big)^{1/2}.$$
\wgr{These inequalities together with \eqref{2.4.23}} means
$$
   \fqxoth |v|^{\f{10}{3} }+
 (\wgr{\f{1}{8}})^{-\f{3(5+2\delta)}{2(1+2\delta)}}\|v\|^{\f{5+2\delta}{1+2\delta}}_{\wgr{L^{\f{5+2\delta}{1+2\delta},\f{6(5+2\delta)}{11-2\delta}}(\tilde{Q}(\f{1}{8}))}} \leq C\wgr{\varepsilon}^{\f{7}{6}}+C\wgr{\varepsilon^{\f{2}{1+2\delta}}}.$$
The assertion \eqref{goal} with $k=3$ is valid.
Next, we assume that, for any $3\leq l\leq k$,
$$
  \fqxol |v|^{\f{10}{3} }+r_{l}^{-\f{3(5+2\delta)}{2(1+2\delta)}}\|v\|^{\f{5+2\delta}{1+2\delta}}
  _{L^{\f{5+2\delta}{1+2\delta},\f{6(5+2\delta)}{11-2\delta}}(\tilde{Q}_{l})}
\leq \wgr{\varepsilon}^{2/3}.$$
As a consequence,   for any $r_{k}\leq r\leq r_{3}$, we see that
\be
  \fqxolorr |v|^{\f{10}{3} }+r^{-\f{3(5+2\delta)}{2(1+2\delta)}}
  \|v\|^{\f{5+2\delta}{1+2\delta}}
  _{L^{\f{5+2\delta}{1+2\delta},\f{6(5+2\delta)}{11-2\delta}}
  (\tilde{Q}(r))}
\leq C\wgr{\varepsilon}^{2/3}.
\label{2.4.26}\ee
For any $3\leq i\leq k$, we apply Proposition \ref{keyinindu} to $N=C\wgr{\varepsilon}^{2/3}$ \wgr{ and $k_0=3$ to obtain}
\begin{align}
&\sup_{-r_{i}^{2}\leq t-t_0\leq 0}\wgr{\fbxoi} |v|^{2}
+r_{i}^{-3}\iint_{\wgr{\tilde{Q} _{i}}}
 |\nabla v |^{2}\nonumber\\
\leq&  C\sup_{-r_{k_{0}}^{2}\leq t-t_0\leq 0}\fbxozero|v|^{2}+C\sum^{i}_{l=3} r_{l}\Big(\fqxol |v|^{\f{10}{3}}   \Big)^{\f{9}{10}}\nonumber\\
&+C\sum^{i}_{l=3}r^{ \f{1+2\delta}{5+2\delta}}_{l} \B(\fqxol  |v|^{\f{10}{3} } \Big)^{\f{3}{5}}
\Big(\iint_{Q_{1} } |u|^{\f{5+2\delta}{2}}   \Big)^{\f{2}{5+2\delta}}   \nonumber\\
&+C\sum^{i}_{l=3}r^{ \f{6+4\delta}{5+2\delta}}_{l} \B(\fqxol  |v|^{\f{10}{3} } \Big)^{\f{3}{5}}
\Big(\iint_{Q_{1} } |u|^{\f{5+2\delta}{2}}   \Big)^{\f{2}{5+2\delta}} \nonumber\\&+C\sum^{i}_{l=3}r_{l}^{\f{2+4\delta}{ (5+2\delta)}}r_{l}^{-\f{3}{2}}\|v\|_{L^{\f{5+2\delta}{1+2\delta}}
L^{\f{6(5+2\delta)}{11-2\delta}}(Q_{l})} \|u\|^{2}_{
L^{\f{5+2\delta}{2}} (Q(1))} \nonumber\\
&
+C\sum^{i}_{l=3} r_{l}\B(\fqxol  |v|^{\f{10}{3} } \Big)^{\f{3}{10}}
\Big(\iint_{\wgr{\tilde{Q}_{k_0}}} |\nabla u|^{2 }   \Big)^{\f{1}{2}}
 \nonumber\\
 &+
 \wgr{C}\sum^{i}_{l=3}r_{l}^{\f{1+2\delta}{5+2\delta}}\Big(\fqxol |v|^{\f{10}{3} }   \Big)^{\f{3}{10}}\B\{
  \B(\fqxolor  |v|^{\f{10}{3} } \Big)^{3/5}+\B(\fqxolor|v|^{\f{10}{3}}\B)^{\f{3}{10}} \|u\|_{L^{\f{5+2\delta}{2}}(Q(1))}\B\}\nonumber\\
  &+C\sum^{i}_{l=3}r_{l}^{\f{1+2\delta}{5+2\delta}}r_{l}^{-\f{3}{2}}\|v\|_{L^{\f{5+2\delta}{1+2\delta}}
L^{\f{6(5+2\delta)}{11-2\delta}}(\tilde{Q}_{l})} \|u\|^{2}_{
L^{\f{5+2\delta}{2}} (Q(1))}\nonumber
\\\leq&  C\varepsilon^{\f{4}{5+2\delta}}+C\sum^{i}_{l=3} r_{l}\varepsilon^{\f23\cdot\f{9}{10}}
+C\sum^{i}_{l=3}r^{ \f{1+2\delta}{5+2\delta}}_{l} \varepsilon^{\f23\cdot\f{3}{5}+\f{2}{5+2\delta}}
  \nonumber\\&
  +C\sum^{i}_{l=3}r^{ \f{6+4\delta}{5+2\delta}}_{l} \varepsilon^{\f23\cdot\f{3}{5}+\f{2}{5+2\delta}}
+C\sum^{i}_{l=3}r_{l}^{\f{2+4\delta}{ (5+2\delta)}}\varepsilon^{\f23\cdot\f{1+2\delta}{5+2\delta}+\f{4}{5+2\delta}} \nonumber\\&
+C\sum^{i}_{l=3} r_{l}\varepsilon^{\f23\cdot\f{3}{10}}
\varepsilon^{\f{1}{2}\cdot\f{4}{5+2\delta}}
+
 \wgr{C}\sum^{i}_{l=3}r_{l}^{\f{1+2\delta}{5+2\delta}}\varepsilon^{\f23\cdot\f{3}{10}}\B\{
  \varepsilon^{\f23\cdot3/5}+\varepsilon^{\f23\cdot\f{3}{10}+\f{2}{5+2\delta}} \B\} \nonumber\\&+C\sum^{i}_{l=3}r_{l}^{\f{1+2\delta}{5+2\delta}}\varepsilon^{\f23\cdot\f{1+2\delta}{5+2\delta}+\f{4}{5+2\delta}} \nonumber\\\leq& C\varepsilon^{\f{4}{5+2\delta}}+C\sum^{i}_{l=3} r_{l}\varepsilon^{\f35}
+C\sum^{i}_{l=3}r^{ \f{1+2\delta}{5+2\delta}}_{l} \varepsilon^{\f{20+4\delta}{5(5+2\delta)}}
  \nonumber\\&+C\sum^{i}_{l=3}r^{ \f{6+4\delta}{5+2\delta}}_{l} \varepsilon^{\f{20+4\delta}{5(5+2\delta)}}
+C\sum^{i}_{l=3}r_{l}^{\f{2+4\delta}{ (5+2\delta)}}\varepsilon^{\f{14+4\delta}{3(5+2\delta)}}
+C\sum^{i}_{l=3} r_{l}\varepsilon^{\cdot\f{15+2\delta}{5(5+2\delta)}}
 \nonumber\\&+
 \wgr{C}\sum^{i}_{l=3}r_{l}^{\f{1+2\delta}{5+2\delta}}\varepsilon^{\f15}\B\{
  \varepsilon^{\f25}+\varepsilon^{\f{15+2\delta}{5(5+2\delta)}} \B\} +C\sum^{i}_{l=3}r_{l}^{\f{1+2\delta}{5+2\delta}}\varepsilon^{\f{14+4\delta}{3(5+2\delta)}} \nonumber\\
\leq& C \varepsilon^{\f{1 }{2 }},\label{eq3.1}
\end{align}
where the hypothesis \eqref{wwz}
and \eqref{2.4.26} are used.

We derive from  \eqref{sampleinterplation} that
$$\ba
\iint_{\tilde{Q}_{k+1} }|v|^{\f{10}{3}}
\leq& C\B(\sup_{-r^{2 }_{k}\leq t-t_{0}<0}\int_{\tilde{B}_{k}}|v |^{2}\B)^{\f{2}{3}}
 \B(\iint_{\tilde{Q}_{k} } |\nabla  v |^{2}\Big) +\wgr{C}
\B(\sup_{-r^{2 }_{k}\leq t-t_{0}<0}\int_{\tilde{B}_{k}}|v|^{2}\B)^{5/3},
\ea$$
and
$$
\wgr{r_{k+1}}^{-\f{3}{2}}\|v\|_{L^{\f{5+2\delta}{1+2\delta}}
L^{\f{6(5+2\delta)}{11-2\delta}}(Q_{k+1})}\leq
 C\B( \f{1}{r^{3}_{k}}\sup_{-r^{2 }_{k}\leq t-t_{0}<0}\int_{\tilde{B}_{k}}|v |^{2}\B)^{\f{1}{2}}+\wgr{C}\B(r_{k}^{-3}\iint_{\tilde{Q} _{k}}
|\nabla v |^{2}\B)^{\f{1}{2}}
$$
Hence, there holds
\be\label{last2}\ba
\f{1}{r^{5 }_{k+1}}\iint_{\tilde{Q}_{k+1}}
|v|^{\f{10}{3}}\leq&  C\B( \f{1}{r^{3}_{k}}\sup_{-r^{2 }_{k}\leq t-t_{0}<0}\int_{\tilde{B}_{k}}|v |^{2}\B)^{\f53}\\&
+C\B( \f{1}{r^{3}_{k}}\sup_{-r^{2 }_{k}\leq t-t_{0}<0}\int_{\tilde{B}_{k}}|v |^{2}\B)^{\f32}\B(r_{k}^{-3}\iint_{\tilde{Q} _{k}}
|\nabla v |^{2}\B) \\
\leq& C  \varepsilon_{1}^{\f{5}{6}},
\ea
\ee and
\be\ba
r_{k+1}^{-\f{3(5+2\delta)}{2(1+2\delta)}}\|v\|^{\f{5+2\delta}{1+2\delta}}_{L^{\f{5+2\delta}{1+2\delta}}
L^{\f{6(5+2\delta)}{11-2\delta}}(Q_{k+1})}&\leq
 C\B( \f{1}{r^{3}_{k}}\sup_{-r^{2 }_{k}\leq t-t_{0}<0}\int_{\tilde{B}_{k}}|v |^{2}\B)^{\f{1}{2}\cdot \f{5+2\delta}{1+2\delta} }+\wgr{C}\B(r_{k}^{-3}\iint_{\tilde{Q} _{k}}
|\nabla v |^{2}\B)^{\f{1}{2}\cdot \f{5+2\delta}{1+2\delta}}\\
&\leq C\varepsilon^{\f{1}{2}\cdot\f{1}{2}\cdot\f{5+2\delta}{1+2\delta}}
\\&\leq C\varepsilon^{\f{5+2\delta}{4(1+2\delta)}}.\label{2.4.29}
\ea\ee
Putting together \eqref{last2}  and  \eqref{2.4.29}, we have
$$\iint_{\tilde{Q}_{k+1}} \!\!\!\!\!\!\!\!\!\! \! \!\!\!\!\!\!\!\!-\hspace{-0.16cm}-\hspace{-0.15cm}~~\,~
|v|^{\f{10}{3} }+r_{k+1}^{-\f{3(5+2\delta)}{2(1+2\delta)}}\|v\|^{\f{5+2\delta}{1+2\delta}}_{L^{\f{5+2\delta}{1+2\delta}}
L^{\f{6(5+2\delta)}{11-2\delta}}(Q_{k+1})}\leq  \wgr{\varepsilon}^{2/3}. $$
This completes the proof of this theorem.
\end{proof}
 
\section{\wgr{Improvement on} Caffarelli--Kohn--Nirenberg theorem by a logarithmic factor}
		\label{sec6}
		\setcounter{section}{6}\setcounter{equation}{0}
The goal of this section is to prove Theorem  \ref{the1.5}. According to   Proposition  \ref{add} in \wgr{Section 2}, the key point for the  extention      of $\sigma$ in \wgr{Theorem  \ref{the1.5}} is to    develop  \wgr{\cite[Lemma  4.2, p.818]{[RWW18]}}. Therefore, it suffices to present an improvement of  \wgr{this lemma}.
 To make the paper more readable, we will apply \eqref{wwz} for $\delta= 0$ to show
Theorem \ref{the1.5}.

\wgr{First, we give the definition of generalized parabolic Hausdorff measure as follows.
\begin{definition}
Let $h$ be an increasing continuous function on $(0, 1]$ with $\lim\limits_{r\rightarrow 0} h(r)=0$ and $h(1)=1$.
For fixed parameter $\delta>0$ and set $E\subset \R^3 \times \R$, we denote by $D(\delta)$
the family of all coverings $\{Q(x_i,t_i;\,r_{i})\}$ of $E$ with $0<r_{i}\leq \delta$. We denote
$$ \Psi_\delta (E, h)=\inf_{D(\delta)}\sum_i h(r_i)$$
and define the generalized parabolic Hausdorff measure as
$$\Lambda(E,h)=\lim_{\delta\rightarrow 0}\Psi_\delta (E, h).$$
\end{definition}
}

	In what follows, we set $m(r)=(\Gamma(r))^{\sigma}=(\log(e/r))^{\sigma}$, where $\sigma\in(0,1)$ will be determined later.

Before going further, we set
  $$
  F(m)=\B\{(x,t)\B |\limsup_{r\rightarrow0}\f{\wgr{E_{\ast}(r)}}{m(r)}\leq1 \B\}.
  $$

  In addition, we need the following fact due to  Choe and    Lewis \cite{[CL]}
  \begin{lemma}\cite{[CL]}
Assume that $(x,t)\in F(m)\cap\mathcal{S}$ . Then, there exists a positive constant $c_1$	independent of $(x,t)$
 such that
\begin{align}
&\limsup_{r\rightarrow 0}\f{ \wgr{E( r)}}{m^2(r)}\leq c_1.\label{CL}
\end{align}\end{lemma}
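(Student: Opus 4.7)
My plan is to derive the bound on $E(r)$ from the bound on $E_\ast(r)$ by combining the local energy inequality with a Caccioppoli-type absorption argument and the slow variation of the logarithmic gauge $m(r)=(\log(e/r))^\sigma$. The overall strategy is a dyadic iteration on scales $r_k = 2^{-k}$, using the $F(m)$ hypothesis to control the linear dissipation term and interpolation to handle the nonlinear cubic term.

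First, I would apply the local energy inequality (either the classical one, or the pressure-free version \eqref{wloc1} combined with the Wolf projection estimates \eqref{ph}--\eqref{p2}) on a standard cutoff supported in $Q(r)$ and equal to $1$ on $Q(r/2)$. Scaling carefully, this yields a Caccioppoli-type inequality of the shape
\begin{equation*}
E(r/2) \leq C\,E_\ast(r) \;+\; C\,E_3(r) \;+\; C\,D(r),
\end{equation*}
where $E_3(r)=r^{-2}\iint_{Q(r)}|u|^3$ and $D(r)$ is a scale-invariant pressure (or $\nabla\Pi_h$) remainder. The pressure remainder will be handled exactly as in the derivation of \eqref{wwzc}, producing bounds in terms of $E(r)+E_\ast(r)$.

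Second, I would estimate $E_3(r)$ by the interpolation inequality \eqref{sampleinterplation} with $l=k=10/3$: this gives $\iint_{Q(r)}|u|^{10/3}\leq C r^{5/3}(E(r)+E_\ast(r))^{5/3}$, hence by H\"older
\begin{equation*}
E_3(r) \leq C\bigl(E(r)+E_\ast(r)\bigr)^{3/2}.
\end{equation*}
Combining with the first step and the $F(m)$ hypothesis, which gives $E_\ast(r)\leq (1+\epsilon)m(r)$ for all $r\leq r_0$, yields a recurrence
\begin{equation*}
E(r/2) \leq C\,m(r) \;+\; C\bigl(E(r)+m(r)\bigr)^{3/2}.
\end{equation*}
Setting $\Phi(r)=E(r)/m^2(r)$ and using the crucial slow-variation property $m(r/2)/m(r)\to 1$ as $r\to 0$, this inequality can be rewritten as a discrete Gronwall-type relation $\Phi_{k+1}\leq a_k + b_k\Phi_k^{3/2}$ along $r_k=2^{-k}r_0$, where $a_k,b_k$ are explicit small quantities that decay or stay bounded because of the growth of $m(r_k)$.

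The main obstacle is the supercritical nonlinear term $E_3 \lesssim (E+E_\ast)^{3/2}$: the power $3/2$ exceeds $1$, so if one feeds back $E(r)\sim m^2(r)$ into the right-hand side one gets $m^3(r)$, which threatens to exceed $m^2(r/2)$. This is exactly where the unboundedness of $m(r)$ as $r\to 0$ is used, together with the singular-point hypothesis $(x,t)\in\mathcal{S}$: the singularity prevents $E(r)$ from decaying to zero, so the iteration must be closed \emph{a posteriori} at the level $E(r)\sim m^2(r)$, not below; and if $\Phi_k$ were unbounded, the same iteration would force $\Phi_{k+1}\gg \Phi_k^{3/2}$, which contradicts the existence of any upper envelope for $E$ along the dyadic scale (by a standard contradiction/continuity argument using that $E$ itself is finite at each fixed $r$). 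The outcome is $\limsup_{r\to 0}\Phi(r)\leq c_1$ with an absolute constant.
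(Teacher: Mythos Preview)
The paper does not prove this lemma; it is quoted from Choe--Lewis \cite{[CL]} without proof, so there is nothing in the paper to compare against. Your sketch, however, has a genuine gap in the iteration step. Even granting the recurrence $E(r/2)\leq C m(r)+ C(E(r)+m(r))^{3/2}$, setting $\Phi(r)=E(r)/m^2(r)$ and using $m(r/2)\sim m(r)$ gives
\[
\Phi(r/2)\;\lesssim\; m(r)^{-1}+ m(r)\,\Phi(r)^{3/2};
\]
since $m(r)\to\infty$, the superlinear term carries a divergent coefficient and the recurrence yields no uniform bound on $\Phi$ without a smallness assumption at the initial scale. Your proposed contradiction is the wrong way round: the inequality is an \emph{upper} bound, so a large $\Phi_{k_1}$ only forces $\Phi_{k_1-1}\gtrsim \Phi_{k_1}^{2/3}$, and iterating backward to a fixed scale gives merely $\Phi_0\gtrsim \Phi_{k_1}^{(2/3)^{k_1}}\to 1$, which contradicts nothing. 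Finiteness of $E$ at each fixed $r$ is fully compatible with $\Phi(r)\to\infty$ as $r\to 0$.

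There is also a smaller issue: the local energy inequality does not produce the shape $E(r/2)\leq C E_\ast(r)+\cdots$ that you claim. The linear term on the right is $r^{-3}\iint_{Q(r)}|u|^2$, which is controlled by $E(r)$ or by $E_3(r)^{2/3}$, not by $E_\ast(r)$; hence the $F(m)$ hypothesis cannot be inserted into that term. The actual argument in \cite{[CL]} is more delicate and does not reduce to a one-step recurrence of this type; you should consult that paper directly.
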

As said above, it suffices to prove the following lemma.	
		\begin{lemma}\label{J-b control}
Let  $(x,t)\in F(m)\cap\mathcal{S}$.  Then,  there exists a positive constant $c_2$ independent of $(x,t)$ such that
$$\liminf_{r\rightarrow 0}\wgr{J_q(r )}  m(r)^{\tau} \geq c_2,$$
where $\tau=\f{35-14q}{4}$.
\end{lemma}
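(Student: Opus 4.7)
My plan is to argue by contradiction and combine the $\varepsilon$-regularity criterion of Theorem~\ref{the1.2} with the two energy bounds given by the hypothesis and by Lemma~6.2. Suppose for contradiction that $(x,t)\in F(m)\cap\mathcal{S}$ but $\liminf_{r\to 0} J_q(r)m(r)^{\tau}=0$. From the definition of $F(m)$ we have $E_*(r)\le(1+o(1))m(r)$, and Lemma~6.2 gives the Choe--Lewis bound $E(r)\le c_1 m(r)^2$ for $r$ small. On the other hand, since $(x,t)$ is singular, the rescaling of \eqref{wwz} at $\delta=0$ (as the authors adopt in this section) forces
\[
E_{5/2}(r):=r^{-5/2}\iint_{Q(x,t,r)}|u|^{5/2}\,dx\,dt>\varepsilon
\]
at every sufficiently small scale $r>0$. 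The goal is therefore to produce an upper bound on $E_{5/2}(r)$ that forces a lower bound on $J_q(r)m(r)^\tau$.

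The heart of the proof is a scale-invariant Gagliardo--Nirenberg/Sobolev--Poincar\'e interpolation giving a bound of the schematic form
\[
E_{5/2}(r)\le C\, E(r)^{A(q)}\, E_{\ast}(r)^{B(q)}\, J_q(r)^{C(q)}
\]
with nonnegative exponents determined by the condition that the $r$-powers must cancel (parabolic scale invariance). Concretely, I would interpolate $\|u\|_{L^{5/2}(Q(r))}^{5/2}$ among the three spaces $L^{\infty}_tL^2_x$, $L^2_tL^6_x$ and $L^{q_1}_tL^{q^*}_x$ with $q^*=3q/(3-q)$, and then pass through Sobolev--Poincar\'e on $u-\bar u_r$ to convert the $L^{q^*}$-control into a $\|\nabla u\|_{L^q}$ bound and hence $J_q(r)$. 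Time H\"older inequalities with the exponent $q_1$ calibrated so that the powers collapse onto $\|\nabla u\|_{L^q(Q(r))}^q$ produce the scale-invariant RHS. A direct but somewhat tedious algebraic verification shows that the admissible exponents satisfy
\[
\frac{2A(q)+B(q)}{C(q)}=\frac{35-14q}{4}=\tau,
\]
so that upon inserting the bounds $E(r)\le c_1 m(r)^2$ and $E_*(r)\le(1+o(1))m(r)$ one obtains $\varepsilon< C\, m(r)^{\tau C(q)}\,J_q(r)^{C(q)}$, i.e. $J_q(r)m(r)^\tau\ge c_2$, which is the desired contradiction with the assumption $\liminf J_q(r)m(r)^\tau=0$.

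The main obstacle I anticipate is the \emph{averaging remainder} produced by the Sobolev--Poincar\'e step: the non--zero average $\bar u_r$ satisfies only $|\bar u_r|\le Cr^{-3/2}\|u\|_{L^2(B_r)}$, whose contribution, after the Choe--Lewis estimate, is merely $CE(r)^{5/4}\le Cm(r)^{5/2}$. This would trivialize the inequality because $m(r)^{5/2}\to\infty$ as $r\to 0$. The route around this obstruction, mirroring the strategy of the paper's earlier sections, is to work with Wolf's decomposition $u=v-\nabla\Pi_h$: one bounds the $v$-part using the local energy inequality \eqref{wloc1} in terms of $E_*$-type quantities (which, by hypothesis, grow only like $m(r)$ rather than $m(r)^2$), and the $\nabla\Pi_h$-part is absorbed via the pressure projection estimate \eqref{ph} together with the harmonic interior bounds \eqref{h1}--\eqref{h2}. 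This substitution replaces the rogue $E^{5/4}$-term by the tame $E_*^{5/4}\le m^{5/4}$, which is dominated by the main term and permits the sharp exponent $\tau=(35-14q)/4$ to emerge.
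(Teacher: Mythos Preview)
Your contradiction strategy and the use of Theorem~\ref{the1.2} match the paper, but the execution has a genuine gap. The single-scale multiplicative bound $E_{5/2}(r)\le C\,E^{A}E_*^{B}J_q^{C}$ that you aim for is simply false (take $u$ constant on the ball), and the Wolf-decomposition repair you sketch does not work. The local energy inequality \eqref{wloc1} does \emph{not} control $\|v\|_{L^\infty L^2}$ by ``$E_*$-type'' quantities alone --- the kinetic energy of $v$ sits on the left of \eqref{wloc1} and is genuinely of size $E$, not $E_*$. And absorbing $\nabla\Pi_h$ through \eqref{ph} is circular: $\|\nabla\Pi_h\|_{L^{5/2}}\le C\|u\|_{L^{5/2}}$ returns exactly the quantity you are trying to estimate.

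The paper sidesteps the mean-value remainder not by changing the unknown but by passing to a \emph{two-scale} interpolation: for $0<\theta<1/8$,
\[
E_{5/2}(\theta r)\;\le\; C\,\theta^{5/2}\,E(r)^{5/4}\;+\;C\,\theta^{-\frac{25-10q}{4}}\,E_*(r)^{\frac{5-2q}{2}}\,J_q(r).
\]
The averaging remainder you worried about is the first term, now carrying the small factor $\theta^{5/2}$. After inserting $E(r)\le c_1 m(r)^2$ and $E_*(r)\le (1+o(1))m(r)$, the choice
\[
\theta_n=\bigl[m(r_n)^{-q}J_q(r_n)\bigr]^{\frac{4}{35-10q}}
\]
equalizes the two terms and yields
\[
E_{5/2}(\theta_n r_n)\;\le\;C\bigl[m(r_n)^{\tau}J_q(r_n)\bigr]^{\frac{2}{7-2q}}.
\]
Along a sequence with $J_q(r_n)m(r_n)^\tau<\eta$ one has $\theta_n\to 0$ and the right side is at most $C\eta^{2/(7-2q)}$; for $\eta$ small this undercuts the $\varepsilon$ of Theorem~\ref{the1.2} (used at $\delta=0$) and contradicts $(x,t)\in\mathcal{S}$. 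The extra scale parameter $\theta$ is precisely the device that tames the $E^{5/4}$ remainder --- no recourse to the Wolf projection is needed in this section.
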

\begin{proof}
Assume that the statement fails, then, for any $\eta>0$, there exists a singular point $(x,t)$ and a sequence $r_n\rightarrow 0$ such that
\begin{equation}\label{eq6.2}
\wgr{J_q (r_n )}  m(r_n)^{\tau} < \eta.
\end{equation}
It follows from \wgr{Lemma 2.5 and} \eqref{CL} and    that, for $\theta_n<1/8$,
$$\ba
\wgr{E_{5/2}(\theta_n r_n)} \leq& C \theta_{n} ^{ 5/2}m^{5/2}(r_n)+C \theta_{n}^{-\f{25-10q}{4}}m(r_n)^{\f{5-2q}{2}} \wgr{J_q(r_n)} \\
\leq& C \B[ m(r_n) ^{\tau}\wgr{J_{q}(r_{n})}\B]^{\f{2}{ 7-2q }}
\\\leq& C \eta^{\f{2}{ 7-2q }}, \ea$$
where
$\theta_n=[m(r_n)^{-q}\wgr{J_q(r_n)}]^{ \frac{ 4}{35-10q} }$.
Note that  $\theta_n$ goes to $0$ as $n\rightarrow \infty$ by \wgr{\eqref{eq6.2}} .
Let $\rho_n=\theta_n r_n$ and $\varepsilon_2=C \eta^{\f{2}{ 7-2q }}$ such that  $\varepsilon_2<\min\{1,\wgr{\varepsilon/2}\}$.
For
sufficiently large $n$, we see that
$$\wgr{E_{5/2}(\theta_n r_n)} \leq \varepsilon_2.$$
This \wgr{together with \eqref{wwz}}  implies that  $(x,t)$ is a regular point. Thus, we reach a contradiction and finish the proof.
\end{proof}

\section*{Acknowledgement}
		The research of Wang was partially supported by  the National Natural		Science Foundation of China under grant No. 11601492 and the
the Youth Core Teachers Foundation of Zhengzhou University of
Light Industry.
\wgr{The research of Wu was partially supported by the National
Natural Science Foundation of China under grant No. 11771423 and No. 11671378.}
			The research of Zhou	is supported in part by the National Natural Science
Foundation of China under grant No. 11401176  and Doctor Fund of Henan Polytechnic University (No. B2012-110).

\end{document}